\newtheorem{theorem}{Theorem}
\newtheorem{corollary}{Corollary}
\newtheorem{remark}{Remark}
\newtheorem{definition}{Definition}
\newtheorem{assumption}{Assumption}
\newtheorem{lemma}{Lemma}
\newtheorem{example}{Example}
\begin{document}
\title{Weak Closed-loop Solvability for Discrete-time Linear-Quadratic Optimal Control}

\author{ Yue Sun$^a$, Xianping Wu$^{b,*}$\thanks{$^*$\textit{corresponding author. Email:xianpingwu@gdut.edu.cn.}}
	 and Xun Li$^c$ \\
		$^a$\textit{School of Control Science and Engineering, Shandong University, Jinan, Shandong, China}\\
		$^b$\textit{School of  Mathematics and Statistics, Guangdong University of Technology, Guangzhou, China}\\
        $^c$\textit{Department of Applied Mathematics, The Hong Kong Polytechnic University, Hong Kong, China} 
 }



\maketitle

\begin{abstract}
In this paper, the open-loop, closed-loop, and weak closed-loop solvability for discrete-time linear-quadratic (LQ) control problem is considered due to the fact that it is always open-loop optimal solvable if the LQ control problem is closed-loop optimal solvable but not vice versa. The contributions are two-fold. On the one hand, the equivalent relationship between the closed-loop optimal solvability and the solution of the generalized Riccati equation is given. On the other hand, when the system is merely open-loop solvable, we have found the equivalent existence form of the optimal solution by perturbation method, which is said to be a weak closed-loop solution. Moreover, it obtains that there is an open-loop optimal control with a linear feedback form of the state. The essential technique is to solve the forward and backward difference equations by iteration. An example sheds light on the theoretical results established.
\end{abstract}

\begin{IEEEkeywords}
open-loop solvability, closed-loop solvability, weak closed-loop solvability, perturbation method, linear feedback form of the state.
\end{IEEEkeywords}

\section{Introduction}

Optimal control theory is an essential component of modern control theory, and linear-quadratic (LQ) optimal control is the fundamental problem and the core foundation of optimal control. The research on traditional LQ control problems began in the 1960s, and after Kalman’s pioneering work \cite{Kalman1960} and  \cite{Kalman1963}, LQ control problems have been widely and deeply studied. Deterministic LQ problem, such as the study in \cite{Naidu} with single input, is concerned with
\begin{eqnarray*}
  &&\min \sum^{N-1}_{t=0}(x'_tQx_t+2u'_tSx_t+u'_tRu_t)+x'_{N}Hx_{N},\\
  &&\mbox{subject\!\quad\!to}\quad  x_{t+1}=Ax_t+Bu_t,
\end{eqnarray*}
where $x_t\in \mathbb{R}^n$ is the state with the initial value $x_0$, $u_t\in \mathbb{R}^m$ is the control input, $A$ and $B$ are constant matrices with compatible dimensions.
Under the traditional assumption, the cost functional always has positive semi-definite weighting matrices for the control and the state. For a constant $\delta >0$,
assume that
\begin{eqnarray*}
  R\geq \delta I, \quad Q-S'R^{-1}S\geq 0.
\end{eqnarray*}
Under these conditions, the LQ problem admits a unique solution if and only if the Riccati equation with the terminal value $P_{N}=H$ has a unique positive definite solution satisfying
\begin{eqnarray*}\label{0}
P_t&\hspace{-0.8em}=&\hspace{-0.8em} Q+A'P_{t+1}A-A'P_{t+1}B(R+B'P_{t+1}B)^{-1} B'P_{t+1}A,
\end{eqnarray*}
while the unique open-loop optimal solution to the problem is a linear feedback form of the state.
Based on the assumption of positive semi-definite weighting matrices, the deterministic LQ problem with time delay has been considered since the 1970s. Koivo-Lee \cite{Koivo1972}, Alekal-Brunovsky-Chyung-Lee \cite{Alekal1971} and Artstein \cite{Artstein1982}, Delfour \cite{Delfour1986} successively focused on the single-delay case and the multiple-delay case. In the positive semi-definite case, no problems exist with the existence of an optimal solution since the optimal cost functional is always positive semi-definite. For the indefinite LQ problem, that is, there is only an assumption on the symmetry of the weighting matrices, referring to Ran-Trentelman \cite{Ran1993}, which studied the infinite-horizon LQ problem with an indefinite quadratic form and presented necessary and sufficient conditions for the existence of an optimal control. As pointed out in Ferrante-Ntogramatzidis \cite{Ferrante2015}, unlike the positive semi-definite case, the finite-horizon indefinite LQ problem may not admit solutions because the performance index can be arbitrarily negative through suitable control actions. 
Using the generalized Riccati difference equation:
\begin{eqnarray*}\label{00}
P_t&\hspace{-0.8em}=&\hspace{-0.8em} Q+A'P_{t+1}A-A'P_{t+1}B(R+B'P_{t+1}B)^{\dagger} B'P_{t+1}A,
\end{eqnarray*}
with the terminal value $P_{N}=H$, Ferrante-Ntogramatzidis \cite{Ferrante2015} presented
a necessary and sufficient condition for the existence of an optimal control.

In the traditional LQ optimal control with a single input, the open-loop optimal solvability is equivalent to the closed-loop optimal solvability. In the indefinite LQ problem,  the closed-loop optimal solvability implies the open-loop optimal solvability, but we have found in the following example that it's not vice versa.
\begin{example}\label{example1}
Consider the following system with one-dimension state equation:
\begin{eqnarray}\label{01}
  x_{t+1}= x_t+u_t, \quad t\in\{0, 1\},
\end{eqnarray}
with the initial value $x_0$. The associated cost functional is given by
\begin{eqnarray}\label{02}
J(x_0, u) = x^2_2-\sum\limits^{1}_{t=0}u^2_t.
\end{eqnarray}

Following from the difference Riccati equation (\ref{00})
with the terminal value $P_2=1$, and by simple calculation, the solution is exactly $P_t\equiv 1$, $t\in \{0, 1, 2\}$.
Combining with the maximum principle, a generalized Riccati equation approach specifies the corresponding  closed-loop solution as follows
\begin{eqnarray}\label{03}
  u^*_t = -(R+B'P_{t+1}B)^{\dagger}B'P_{t+1}Ax_t \equiv0, \quad t\in\{0, 1\},
\end{eqnarray}
due to $R=-1$ and $0^{\dagger}=0$.
It should be noted that (\ref{03}) is no longer an optimal open-loop solution if $x_0\neq 0$. To be specific, adding $u^*_t$ in (\ref{03}) into (\ref{01}), the state $x^*_t$ is calculated as
\begin{eqnarray*}
  x^*_t = x_0 +\sum\limits^{t-1}_{i=0}u^*_i=x_0, \quad t\in\{1, 2\}.
\end{eqnarray*}
To this end, the cost functional in (\ref{02}) is given by
\begin{eqnarray*}
  J(x_0, u^*) =x^2_0 >0,
\end{eqnarray*}
for any $x_0\neq 0$.

Moreover, let $\bar{u}_t$ be the control defined by
\begin{eqnarray*}
 \bar{u}_t=Mx_t, \quad M=-0.3522.
\end{eqnarray*}
In this way, we have
\begin{eqnarray*}
\bar{u}_0&\hspace{-0.8em}=&\hspace{-0.8em}Mx_0,\\
 \bar{u}_1&\hspace{-0.8em}=&\hspace{-0.8em}Mx_1=M(x_0+u_0)=M(x_0+Mx_0)\nonumber\\
 &\hspace{-0.8em}=&\hspace{-0.8em}M(1+M)x_0.
\end{eqnarray*}
Substituting $\bar{u}_0$ and $\bar{u}_1$ into (\ref{01}) yields,
\begin{eqnarray*}
  x_2 =x_0+\bar{u}_0+\bar{u}_1.
\end{eqnarray*}
Thus, the cost functional $J(x_0, \bar{u})$ is given by
\begin{eqnarray}\label{010}
J(x_0, \bar{u})&\hspace{-0.8em}=&\hspace{-0.8em}x^2_2-\bar{u}^2_0-\bar{u}^2_1\nonumber\\
&\hspace{-0.8em}=&\hspace{-0.8em}(x_0+\bar{u}_0+\bar{u}_1)^2-\bar{u}^2_0-\bar{u}^2_1\nonumber\\
&\hspace{-0.8em}=&\hspace{-0.8em}x^2_0+2x_0\bar{u}_0+2x_0\bar{u}_1+2\bar{u}_0\bar{u}_1\nonumber\\
&\hspace{-0.8em}=&\hspace{-0.8em}(2M^3+4M^2+4M+1)x^2_0.
\end{eqnarray}
Since the real root of $2M^3+4M^2+4M+1=0$ is expressed as
\begin{eqnarray*}
  M&\hspace{-0.8em}=&\hspace{-0.8em}\sqrt[3]{\frac{13}{108}+\sqrt{\frac{13^2}{108^2}+\frac{2^3}{9^3}}}
  + \sqrt[3]{\frac{13}{108}-\sqrt{\frac{13^2}{108^2}+\frac{2^3}{9^3}}}\nonumber\\
  &\hspace{-0.8em}&\hspace{-0.8em}-\frac{2}{3}=-0.3522,
\end{eqnarray*}
using $\bar{u}_t=Mx_t$ and $M=-0.3522$ for any $x_0 \neq 0$,
we get
\begin{eqnarray*}
J(x_0, \bar{u})&\hspace{-0.8em}=&\hspace{-0.8em}0\cdot x^2_0=0<J(x_0, u^*).
\end{eqnarray*}
Based on the discussion above, since the cost functional is nonnegative, $\bar{u}$ is open-loop optimal for the initial value $x_0$, but $u^*$ is not.
\end{example}

The above example shows that the generalized Riccati equation (\ref{00}) is helpless in the open-loop solvability of certain LQ problems. Does it inspire that when the LQ problem is merely open-loop solvable, is it possible to get a linear feedback form of state for an open-loop optimal control? Motivated by it, combined with the study of open-loop and closed-loop solvability of It\^{o} differential system in \cite{Sun2014} and \cite{Sun2016}, the necessity and sufficient condition for the closed-loop solvability of LQ problem is derived for a system with discrete time, which is the fundamental of the following discussion. Moreover, by using the perturbation approach introduced in \cite{Sun2018} and \cite{Wen2021}, some equivalent conditions for the open-loop solvability of the LQ problem are given. Moreover, a weak closed-loop solvability of the LQ problem is introduced to obtain a linear state presentation of the open-loop optimal control. And it proves that the existence of a weak closed-loop solution is equivalent to the open-loop solvability of the LQ problem, whose outcome is an open-loop optimal control.

The outline of this paper is arranged as follows. Problem formulation and some preliminaries related to the LQ control problem are introduced in Section II. The perturbation method investigates the equivalent conditions of open-loop solvability in Section III. The relationship between open-loop and weak closed-loop solvability is proposed in Section IV. A numerical example in Section V sheds light on the results established in this paper.

\emph{Notations:} $\mathbb{R}^n$ denotes an $n$-dimensional Euclidean space.
$A'$ means the transpose of the matrix $A$.
A symmetric matrix $A >0$ (reps. $\geq0$) means that it is positive definite (reps. positive semi-definite).
Let $\mathbb{N}$ and $\tilde{\mathbb{N}}$ represent the sets of $\{0, 1, \cdots, N-1\}$ and $\{0, 1, \cdots, m\}$ with $m\leq N-2$, respectively.
Denote $L^2(\mathbb{N}; \mathbb{R}^n)=\{\phi: \mathbb{N}\rightarrow \mathbb{R}^n\Big| \sum\limits^{N-1}_{t=0} |\phi_t|^2< \infty\}$.
Let $M^{\dag}$ be  Moore-Penrose inverse of matrix $M$ satisfying $MM^{\dag}M=M$, $M^{\dag}MM^{\dag}=M^{\dag}$, $(MM^{\dag})'=MM^{\dag}$ and $(M^{\dag}M)'=M^{\dag}M$.
$Rang(A)=\{Ax| x\in \mathbb{R}^n\}$ with $n$ being the dimension of $x$ is the range of $A$.

\section{Problem Formulation}
Consider the discrete-time system:
\begin{eqnarray}\label{1}
  x_{t+1} &\hspace{-0.8em}=&\hspace{-0.8em} A_tx_t+B_tu_t,
\end{eqnarray}
where $x_t\in \mathbb{R}^n$ is the state with the initial value $x_0${\color{blue},} $u_t\in \mathbb{R}^m$ is the control input{\color{blue},} $A_t$ and $B_t$ are deterministic matrix-valued functions of compatible dimensions. The associated cost functional is given by
\begin{eqnarray}\label{2}
J(x_0, u)
&\hspace{-0.8em}=&\hspace{-0.8em} \sum\limits^{N-1}_{t=0}[x'_tQ_tx_t+2u'_tS_tx_t+u'_tR_tu_t] \nonumber \\
&\hspace{-0.8em} &\hspace{-0.8em}  +x'_{N}Hx_{N},
\end{eqnarray}
where $H\in \mathbb{R}^{n\times n}$ is a symmetric constant matrix, $S_t$ is  deterministic matrix-valued function, $Q_t$ and $R_t$ are   deterministic and  symmetric matrix-valued functions.

The problem considered in this paper is given below.
\smallskip

\emph{Problem (LQ).} For any given initial value $x_0\in \mathbb{R}^n$, find a controller
$\bar{u}\in L^2(\mathbb{N}; \mathbb{R}^{m})$ satisfying
\begin{eqnarray*}
J(x_0, \bar{u}) \leq J(x_0, u), \quad\forall u\in L^2(\mathbb{N}; \mathbb{R}^{m}).
\end{eqnarray*}

Moreover, the value function of Problem (LQ) with the initial value $x_0$ is defined as
\begin{eqnarray*}
  V(x_0) =\inf_{u\in L^2(\mathbb{N}; \mathbb{R}^{m})}J(x_0, u).
\end{eqnarray*}

Some assumptions regarding the coefficients are established.
\begin{assumption}\label{A1}
The coefficients $A: \mathbb{N}\rightarrow \mathbb{R}^{n\times n}$ and
 $B: \mathbb{N}\rightarrow \mathbb{R}^{n\times m}$ are uniformly bounded mappings.
\end{assumption}

\begin{assumption}\label{A2}
The coefficients $Q: \mathbb{N}\rightarrow \mathbb{R}^{n\times n}$,
$R: \mathbb{N}\rightarrow \mathbb{R}^{m\times m}$ and
$S: \mathbb{N}\rightarrow \mathbb{R}^{m\times n}$ are uniformly bounded mappings.
\end{assumption}
%

Before proposing the main results of this paper, the definitions of open-loop, closed-loop, and weak closed-loop solvable Problem (LQ) are given.

\begin{definition}
Problem (LQ) is (uniquely) open-loop solvable with the initial value $x_0$ if there exists a (unique) $\bar{u}=\bar{u}(x_0)\in L^2(\mathbb{N}; \mathbb{R}^{m})$, which means it's related with the initial value $x_0$, satisfying
\begin{eqnarray*}
J(x_0, \bar{u}) \leq J(x_0, u), \quad\forall u\in L^2(\mathbb{N}; \mathbb{R}^{m}),
\end{eqnarray*}
then, $\bar{u}$ is called as the open-loop optimal control with the initial value $x_0$;
Problem (LQ) is (uniquely) open-loop solvable if it is (uniquely) open-loop solvable with any initial value $x_0$.
\end{definition}

\begin{definition}\label{D2}
Let $K\in \mathbb{R}^{m\times n}$ and $v\in \mathbb{R}^{m}$ unrelated with the initial value $x_0$. The pair$(K, v)$ is called as the closed-loop solution on $\mathbb{N}$ if $\sum\limits^{N-1}_{t=0} |K_t|^2< \infty$ and $\sum\limits^{N-1}_{t=0} |v_t|^2< \infty$, and denote
$\mathcal{K}_{\mathbb{N}}$
as the set of all closed-loop solutions;
$(K^*, v^*)\in \mathcal{K}_{\mathbb{N}}$ is called as the closed-loop optimal solution on $\mathbb{N}$ if
$\forall (K, v)\in\mathcal{K}_{\mathbb{N}}$, there holds
\begin{eqnarray*}
J(x_0, K^*x^*+v^*) \leq J(x_0, Kx+v),
\end{eqnarray*}
where $x^*$ and $x$ are the solution to the corresponding closed-loop systems satisfying
\begin{eqnarray*}
x_{t+1} &\hspace{-0.8em}=&\hspace{-0.8em} (A_t+B_tK_t)x_t+B_tv_t,\\
x^*_{t+1} &\hspace{-0.8em}=&\hspace{-0.8em} (A_t+B_tK^*_t)x^*_t+B_tv^*_t,
\end{eqnarray*}
respectively, with the initial value $x_0$ and $x^*_0=x_0$;
If there exists a (unique) closed-loop optimal solution on $\mathbb{N}$, then Problem (LQ) is called as (unique) closed-loop solvable.
\end{definition}

\begin{remark}
Following from the Definition \ref{D2}, $\forall x_0\in \mathbb{R}^n$, $(K^*, v^*)$ is an closed-loop optimal solution  on $\mathbb{N}$ to Problem (LQ) if and only if
\begin{eqnarray*}
J(x_0, K^*x^*+v^*) \leq J(x_0,u), \quad \forall u\in L^2(\mathbb{N}; \mathbb{R}^{m}).
\end{eqnarray*}
Hence, it should be noted that  $u^*=K^*x^*+v^*$  is also an open-loop optimal solution to the Problem (LQ) with the initial value $x_0^*$. That is to say, the existence of the closed-loop optimal solution to Problem (LQ) indicates the existence of the open-loop optimal solution to it, but the reverse is not true, which can be seen in Example \ref{example1}. When Problem (LQ) is merely open-loop solvable, whether there also exists a control with a linear feedback form of the state is studied in this paper. In the following, a weak closed-loop solvability of Problem (LQ) is defined.
\end{remark}

\begin{definition}\label{definition3}
For $\tilde{\mathbb{N}}\subseteq {\mathbb{N}}$,
let $K\in L^2(\tilde{\mathbb{N}}; \mathbb{R}^{m\times n})$ and
$v\in L^2(\tilde{\mathbb{N}}; \mathbb{R}^{m})$. $(K, v)$ is called as  a weak closed-loop solution on $\tilde{\mathbb{N}}$ for any initial value $x_0$, if $u=Kx
+v \in L^2(\mathbb{N}; \mathbb{R}^{m})$, where $x$ is the solution to the weak closed-loop state
\begin{eqnarray*}
x_{t+1} &\hspace{-0.8em}=&\hspace{-0.8em} (A_t+B_tK_t)x_t+B_tv_t,
\end{eqnarray*}
with the initial value $x_0$, and the set of all weak closed-loop solutions is denoted as $\mathcal{K}^w_{\mathbb{N}}$;
$(K^*, v^*)\in \mathcal{K}^w_{\mathbb{N}}$ is said to be weak closed-loop optimal, if for any initial value $x_0$ and $ (K, v)\in\mathcal{K}^w_{\mathbb{N}}$, there holds
\begin{eqnarray*}
J(x_0, K^*x^*+v^*) \leq J(x_0, Kx+v),
\end{eqnarray*}
where $x^*$ is the solution to the weak closed-loop state
\begin{eqnarray*}
x^*_{t+1} &\hspace{-0.8em}=&\hspace{-0.8em} (A_t+B_tK^*_t)x^*_t+B_tv^*_t,
\end{eqnarray*}
with the initial value $x^*_0=x_0$;
Problem (LQ) is said to be (uniquely) weakly closed-loop optimal solvable if there exists a  (uniquely) weak closed-loop optimal solution $(K^*, v^*)$ on $\tilde{\mathbb{N}}$.
\end{definition}

\begin{remark}
Following from the Definition \ref{definition3}, for any $x_0\in \mathbb{R}^n$, $(K^*, v^*)\in\mathcal{K}^w_{\mathbb{N}}$ is a weak closed-loop optimal solution to Problem (LQ) if and only if
\begin{eqnarray*}
J(x_0, K^*x^*+v^*) \leq J(x_0,u), \quad \forall u\in L^2(\mathbb{N}; \mathbb{R}^{m}).
\end{eqnarray*}
\end{remark}

Based on the discussion above, the conditions of open-loop and closed-loop solvable for the Problem (LQ) are derived.

\begin{lemma}\label{lemma1}
Under Assumption \ref{A1}-\ref{A2}, if Problem (LQ) is open-loop solvable, then $u$ subject to (\ref{1}) satisfies the equilibrium condition
\begin{eqnarray}\label{3}
  0 &\hspace{-0.8em}=&\hspace{-0.8em} R_tu_t+S_tx_t+B'_t\lambda_t,
\end{eqnarray}
where the co-state satisfies the following backward equation with the terminal value $\lambda_{N-1}=Hx_{N}$ and
\begin{eqnarray}\label{4}
  \lambda_{t-1}&\hspace{-0.8em}=&\hspace{-0.8em}Q_tx_t+S'_tu_t+A'_t\lambda_t.
\end{eqnarray}
\end{lemma}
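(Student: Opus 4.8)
The plan is to derive the stated first-order (equilibrium) conditions via a standard variational argument combined with the discrete-time adjoint (co-state) recursion. Since Problem (LQ) is open-loop solvable with initial value $x_0$, there exists $\bar u\in L^2(\mathbb{N};\mathbb{R}^m)$ minimizing $u\mapsto J(x_0,u)$ over the whole space $L^2(\mathbb{N};\mathbb{R}^m)$; denote by $\bar x$ the corresponding state trajectory solving \eqref{1}. I would fix an arbitrary perturbation direction $\eta\in L^2(\mathbb{N};\mathbb{R}^m)$ and a scalar $\varepsilon$, let $u^\varepsilon=\bar u+\varepsilon\eta$, and let $x^\varepsilon$ be the associated state. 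Because the state equation \eqref{1} is affine in $(x,u)$, one has $x^\varepsilon_t=\bar x_t+\varepsilon y_t$ exactly, where $y$ is the solution of the linear variational equation $y_{t+1}=A_ty_t+B_t\eta_t$ with $y_0=0$. Substituting into the quadratic functional \eqref{2} gives $J(x_0,u^\varepsilon)$ as an explicit quadratic polynomial in $\varepsilon$, and optimality of $\bar u$ forces the coefficient of $\varepsilon^1$ to vanish:
\begin{eqnarray*}
0 = \sum_{t=0}^{N-1}\bigl[2\bar x_t'Q_ty_t+2\eta_t'S_t\bar x_t+2\bar u_t'S_ty_t+2\bar u_t'R_t\eta_t\bigr]+2\bar x_N'Hy_N.
\end{eqnarray*}

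Next I would introduce the co-state $\lambda$ exactly as in the statement: set $\lambda_{N-1}=H\bar x_N$ and run the backward recursion \eqref{4}, $\lambda_{t-1}=Q_t\bar x_t+S_t'\bar u_t+A_t'\lambda_t$, for $t=N-1,\dots,1$. The key algebraic manipulation is an Abel/summation-by-parts identity: using $y_{t+1}=A_ty_t+B_t\eta_t$ and the definition of $\lambda$, I would compute $\sum_{t}(\lambda_{t-1}'y_t-\lambda_t'y_{t+1})$ as a telescoping sum, which on one hand collapses (using $y_0=0$ and the terminal value of $\lambda$) to $-\lambda_{N-1}'y_N=-\bar x_N'Hy_N$, and on the other hand expands via the two recursions to $\sum_t\bigl[(Q_t\bar x_t+S_t'\bar u_t)'y_t-\lambda_t'B_t\eta_t\bigr]$. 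Combining this identity with the first-order condition above, all the terms involving $y_t$ cancel against the co-state terms, leaving
\begin{eqnarray*}
0=\sum_{t=0}^{N-1}\eta_t'\bigl(R_t\bar u_t+S_t\bar x_t+B_t'\lambda_t\bigr).
\end{eqnarray*}
Since $\eta\in L^2(\mathbb{N};\mathbb{R}^m)$ is arbitrary, the summand must vanish for each $t$, which is precisely \eqref{3}.

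The main obstacle, and the step that needs to be carried out carefully, is the bookkeeping in the summation-by-parts step: the indexing convention here is slightly nonstandard (the co-state carries index $t-1$ on the left of \eqref{4} and the terminal condition is imposed at $t=N-1$ rather than $t=N$), so I would need to track the boundary terms at both ends precisely to confirm that the $y_0=0$ contribution drops and that the terminal term reproduces exactly $\bar x_N'Hy_N$. Everything else — the affine dependence $x^\varepsilon=\bar x+\varepsilon y$, the expansion of the quadratic cost, and the vanishing of the first variation — is routine. No convexity or definiteness assumption on $Q_t,R_t,H$ is needed for this direction, only Assumptions \ref{A1}–\ref{A2} to guarantee that all trajectories lie in $L^2$ and the sums are finite; the argument is purely the necessity of stationarity at an interior minimizer over a linear space.
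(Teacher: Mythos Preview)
Your proposal is correct. The variational argument you outline---perturbing the optimizer, expanding the quadratic cost to first order in $\varepsilon$, and then introducing the adjoint $\lambda$ so that a telescoping (summation-by-parts) identity eliminates the $y_t$-terms---is exactly the standard derivation of the discrete-time stationarity condition, and the index bookkeeping you flag (the $\lambda_{-1}'y_0$ boundary term vanishing because $y_0=0$, the terminal term reproducing $\bar x_N'Hy_N$) works out as you anticipate.

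The paper itself does not actually give a proof: it simply writes ``The proof can be obtained immediately from \cite{Zhang2012}.'' So your write-up is strictly more informative than what appears in the paper. The content of your argument is, of course, what one would find in that reference or in any standard treatment of the discrete-time maximum principle, so there is no methodological divergence---you are supplying the details the authors chose to outsource.
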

\begin{proof}
The proof can be obtained immediately from \cite{Zhang2012}.
\end{proof}

\begin{lemma}\label{lemma2}
Under Assumption \ref{A1}-\ref{A2}, let $(K^*, v^*)$ be a closed-loop solution to Problem (LQ). Then $\forall x_0\in \mathbb{R}^n$, the following FBDEs admits an solution ($x^*_t, \lambda^*_{t-1}$) for $t\in \mathbb{N}$:
\begin{eqnarray}
x^*_{t+1} &\hspace{-0.8em}=&\hspace{-0.8em} (A_t+B_tK^*_t)x^*_t+B_tv^*_t,\label{5-1}\\
\lambda^*_{t-1}&\hspace{-0.8em}=&\hspace{-0.8em}(Q_t+S'_tK^*_t)x^*_t+S'_tv^*_t+A'_t\lambda^*_t,
\label{5-2}
\end{eqnarray}
with the initial value $x^*_0=x_0$ and $\lambda^*_{N-1}=Hx^*_{N}$. Moreover, the equilibrium strategy satisfies
\begin{eqnarray}\label{6}
  0 &\hspace{-0.8em}=&\hspace{-0.8em} (R_tK^*_t+S_t)x^*_t+R_tv^*_t+B'_t\lambda^*_t.
\end{eqnarray}
\end{lemma}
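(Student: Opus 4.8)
The plan is to deduce Lemma~\ref{lemma2} from Lemma~\ref{lemma1} by exploiting the fact, recorded in the remark following Definition~\ref{D2}, that a closed-loop optimal solution furnishes an open-loop optimal control for every initial value. Fix $x_0\in\mathbb{R}^n$, let $x^*$ be the solution of the closed-loop state equation (\ref{5-1}) with $x^*_0=x_0$, and set $u^*_t:=K^*_tx^*_t+v^*_t$ for $t\in\mathbb{N}$. Since $\mathbb{N}$ is a finite index set, the square-summability of $K^*$ and $v^*$ from Definition~\ref{D2} together with the boundedness of $A$ and $B$ from Assumption~\ref{A1} ensure $u^*\in L^2(\mathbb{N};\mathbb{R}^m)$, and $x^*$ is precisely the trajectory of (\ref{1}) generated by $u^*$. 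By the remark following Definition~\ref{D2}, $J(x_0,u^*)=J(x_0,K^*x^*+v^*)\le J(x_0,u)$ for all $u\in L^2(\mathbb{N};\mathbb{R}^m)$, so $u^*$ is an open-loop optimal control with initial value $x_0$.

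Next I would apply Lemma~\ref{lemma1} to the open-loop optimal pair $(x^*,u^*)$. This produces a co-state $\{\lambda^*_{t-1}\}_{t\in\mathbb{N}}$ with terminal value $\lambda^*_{N-1}=Hx^*_N$ such that
\begin{equation*}
\lambda^*_{t-1}=Q_tx^*_t+S'_tu^*_t+A'_t\lambda^*_t,\qquad 0=R_tu^*_t+S_tx^*_t+B'_t\lambda^*_t .
\end{equation*}
Substituting $u^*_t=K^*_tx^*_t+v^*_t$ into the first identity gives $\lambda^*_{t-1}=(Q_t+S'_tK^*_t)x^*_t+S'_tv^*_t+A'_t\lambda^*_t$, i.e.\ (\ref{5-2}); substituting it into the second gives $0=(R_tK^*_t+S_t)x^*_t+R_tv^*_t+B'_t\lambda^*_t$, i.e.\ (\ref{6}). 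As (\ref{5-1}) is the defining equation of $x^*$, the pair $(x^*_t,\lambda^*_{t-1})$ solves the asserted FBDEs. I would also note that existence of a solution is in fact elementary even without invoking Lemma~\ref{lemma1}: (\ref{5-1}) determines $x^*$ by forward recursion from $x^*_0=x_0$, the terminal condition then fixes $\lambda^*_{N-1}=Hx^*_N$, and (\ref{5-2}) determines $\lambda^*$ by backward recursion; the genuine content of the lemma is the extra equilibrium relation (\ref{6}).

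The one step requiring care is the first: one must be sure that the closed-loop optimality of $(K^*,v^*)$, which by definition is tested only against competing closed-loop pairs, really upgrades to open-loop optimality of $u^*=K^*x^*+v^*$ against all controls in $L^2(\mathbb{N};\mathbb{R}^m)$ --- this is exactly what the remark after Definition~\ref{D2} supplies. Once that is in hand, the remainder is the substitution $u^*_t=K^*_tx^*_t+v^*_t$ and a routine check of the time indices in the backward recursion (\ref{4}), so I do not anticipate any further obstacle.
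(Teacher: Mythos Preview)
Your proof is correct and essentially matches the paper's. The paper substitutes $u=K^*x+v^*$ into the state equation and cost, obtaining a new LQ problem in the control $v$, and then says ``repeating the process of Lemma~\ref{lemma1}'' to derive (\ref{5-2})--(\ref{6}); you instead invoke the remark after Definition~\ref{D2} to certify $u^*=K^*x^*+v^*$ as open-loop optimal, apply Lemma~\ref{lemma1} directly, and substitute afterward---the two differ only in whether the substitution precedes or follows the appeal to Lemma~\ref{lemma1}, and both yield the same co-state and equilibrium relations.
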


\begin{proof}
If $(K^*, v^*)$ is a closed-loop solution to Problem (LQ), reconsidering (\ref{1})-(\ref{2}), one has
\begin{eqnarray*}
x_{t+1} &\hspace{-0.8em}=&\hspace{-0.8em} (A_t+B_tK^*_t)x_t+B_tv^*_t,\\
 J(x_0, K^*x^*+v^*) &\hspace{-0.8em}=&\hspace{-0.8em} \sum\limits^{N-1}_{t=0}\{x'_t[Q_t+(K^*_t)'R_tK^*_t+(K^*_t)'S_t\nonumber\\
 &\hspace{-0.8em} &\hspace{-0.8em} +S'_tK^*_t]x_t
 +2(v^*_t)'(S_t+R_tK^*_t)x_t\nonumber\\
 &\hspace{-0.8em}&\hspace{-0.8em}+(v^*_t)'R_tv^*_t\}+x'_{N}Hx_{N}.
\end{eqnarray*}
Repeating the process of Lemma \ref{lemma1}, (\ref{5-2})-(\ref{6}) are derived accordingly.
\end{proof}

The equivalent relationship between the closed-loop solvable of the Problem (LQ) and the solvability of the generalized Riccati equation is given below. 

\begin{theorem}\label{theorem1}
Under Assumption \ref{A1}-\ref{A2}, Problem (LQ) has a closed-loop solution $(K^*, v^*)$ if and only if the  generalized Riccati equation
with the terminal value $P_{N}=H$
\begin{eqnarray}\label{7}
P_t&\hspace{-0.8em}=&\hspace{-0.8em} Q_t+A'_tP_{t+1}A_t-(A'_tP_{t+1}B_t+S'_t)\nonumber\\
&\hspace{-0.8em} &\hspace{-0.8em} \times \hat{R}_t^{\dagger}(B'_tP_{t+1}A_t+S_t),
\end{eqnarray}
admits a solution $P_t$ on $t\in \mathbb{N}$ such that
\begin{eqnarray}
&\hspace{-0.8em}&\hspace{-0.8em}\hat{R}_t \geq 0,\label{8-1}\\
&\hspace{-0.8em}&\hspace{-0.8em}\hat{K}_t=- \hat{R}_t^{\dagger}(B'_tP_{t+1}A_t+S_t)\in L^2(\mathbb{N}; \mathbb{R}^{m\times n}),\label{8-2}\\
&\hspace{-0.8em}&\hspace{-0.8em}Range(B'_tP_{t+1}A_t+S_t)  \subseteq  Range( \hat{R}_t),\label{8-3}
\end{eqnarray}
where $\hat{R}_t=R_t+B'_tP_{t+1}B_t$.
In this case, the closed-loop solution $(K^*, v^*)$ is given by
\begin{eqnarray}
K^*_t&\hspace{-0.8em}=&\hspace{-0.8em} \hat{K}_t+(I-\hat{R}^{\dagger}_t\hat{R}_t) z_t,\label{9-1}\\
v^*_t
&\hspace{-0.8em}=&\hspace{-0.8em}(I-\hat{R}^{\dagger}_t\hat{R}_t)y_t,\label{9-3}
\end{eqnarray}
where $z\in L^2(\mathbb{N}; \mathbb{R}^{m\times n})$ and $y\in L^2(\mathbb{N}; \mathbb{R}^{m})$
are arbitrary.\\
Moreover, the value function is
\begin{eqnarray}\label{10}
  V(x_0)=x'_0P_0x_0.
\end{eqnarray}
\end{theorem}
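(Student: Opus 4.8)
The plan is to prove Theorem~\ref{theorem1} by the classical \emph{completion of squares} method adapted to discrete time, carried out jointly with the necessity direction that is extracted from Lemma~\ref{lemma2}. For the sufficiency direction, I would start from a candidate quadratic value $x_t'P_tx_t$ along an arbitrary trajectory of \eqref{1}, form the telescoping sum
\begin{eqnarray*}
x_N'Hx_N - x_0'P_0x_0 = \sum_{t=0}^{N-1}\bigl(x_{t+1}'P_{t+1}x_{t+1} - x_t'P_tx_t\bigr),
\end{eqnarray*}
substitute $x_{t+1}=A_tx_t+B_tu_t$, and add this identity to $J(x_0,u)$. A direct expansion then rewrites the running cost as
\begin{eqnarray*}
J(x_0,u) = x_0'P_0x_0 + \sum_{t=0}^{N-1}\Bigl[\, u_t'\hat R_t u_t + 2u_t'(B_t'P_{t+1}A_t+S_t)x_t + x_t'\bigl(Q_t+A_t'P_{t+1}A_t-P_t\bigr)x_t \,\Bigr],
\end{eqnarray*}
and by the Riccati relation \eqref{7} the last quadratic-in-$x_t$ term equals $x_t'(A_t'P_{t+1}B_t+S_t')\hat R_t^{\dagger}(B_t'P_{t+1}A_t+S_t)x_t$. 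Using \eqref{8-3} (range condition) together with the Moore--Penrose identities, one completes the square to obtain
\begin{eqnarray*}
J(x_0,u) = x_0'P_0x_0 + \sum_{t=0}^{N-1} \bigl(u_t - \hat K_t x_t\bigr)'\hat R_t\bigl(u_t - \hat K_t x_t\bigr),
\end{eqnarray*}
where $\hat K_t$ is as in \eqref{8-2}. From \eqref{8-1} each summand is nonnegative, so $J(x_0,u)\ge x_0'P_0x_0$ with equality exactly when $\hat R_t(u_t-\hat K_t x_t)=0$; choosing $u_t = K_t^* x_t + v_t^*$ with $(K^*,v^*)$ given by \eqref{9-1}--\eqref{9-3} makes $u_t-\hat K_t x_t = (I-\hat R_t^\dagger\hat R_t)(z_tx_t+y_t)$, which lies in the kernel of $\hat R_t$, hence attains the bound. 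This simultaneously proves closed-loop solvability and the value formula \eqref{10}.

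For the necessity direction, I would assume $(K^*,v^*)$ is a closed-loop solution and invoke Lemma~\ref{lemma2}, which supplies the FBDE system \eqref{5-1}--\eqref{6}. The key step is to \emph{decouple} the FBDEs by the ansatz $\lambda_{t-1}^* = P_t x_t^* + \eta_t$ for a symmetric matrix sequence $P_t$ and a vector sequence $\eta_t$, both independent of $x_0$; since $x_0$ ranges over all of $\mathbb{R}^n$ and, by Definition~\ref{D2}, $(K^*,v^*)$ does not depend on $x_0$, one can separate the terms that are linear in $x_0$ from the rest. Plugging the ansatz into \eqref{5-2} and \eqref{6} and matching the $x_t^*$-coefficient forces $\hat R_t K_t^* = -(B_t'P_{t+1}A_t+S_t)$, which forces the range condition \eqref{8-3} and, after substituting back, the Riccati recursion \eqref{7} with $P_N=H$. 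Matching the $x_0$-free part yields a companion backward recursion for $\eta_t$ driven by $v_t^*$. The nonnegativity \eqref{8-1} comes from the second-order (convexity) necessary condition for optimality: perturbing $u$ around the optimal control by $\epsilon\mathbf 1_{\{t=s\}}\xi$ and using that $J$ cannot decrease gives $\hat R_s\ge 0$ on the relevant subspace, which combined with the range condition upgrades to $\hat R_t\ge0$ as a genuine inequality; finally \eqref{8-2} follows because $\hat K_t$ is a measurable/summable combination of the bounded coefficients and the (bounded) Riccati solution.

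The main obstacle I anticipate is the necessity half, specifically the rigorous justification that the decoupling ansatz $\lambda_{t-1}^*=P_tx_t^*+\eta_t$ is \emph{forced} rather than merely consistent: one must argue that because \eqref{5-1}--\eqref{6} hold for every initial state $x_0$ with the \emph{same} $(K^*,v^*)$, the co-state depends affinely on $x_0$, and then back out $P_t$ and $\eta_t$ by varying $x_0$ over a basis; care is needed to ensure $P_t$ is symmetric and well defined even when $\hat R_t$ is singular (so that $\hat R_t^\dagger$ rather than $\hat R_t^{-1}$ appears). A secondary technical point is handling the pseudoinverse identities cleanly in the completion-of-squares step—ensuring that $(A_t'P_{t+1}B_t+S_t')\hat R_t^\dagger\hat R_t = A_t'P_{t+1}B_t+S_t'$ under \eqref{8-3}, so the cross terms collapse exactly. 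Once these algebraic facts are in place, the sufficiency direction and the value-function identity \eqref{10} are routine. I would present sufficiency first (it is self-contained and gives \eqref{10}), then necessity, and close by noting that \eqref{9-1}--\eqref{9-3} exhaust all closed-loop solutions because any two differ by an element of $\ker\hat R_t$, which does not affect $J$.
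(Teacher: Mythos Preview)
Your plan matches the paper's proof almost exactly: both use completion of squares for sufficiency (your telescoping identity becomes the paper's display \eqref{16}) and FBDE decoupling via Lemma~\ref{lemma2} for necessity, with the paper splitting the latter into a homogeneous pass (set $v^*=0$, obtain $\bar\lambda_{t-1}=P_t\bar x_t$, and read off $\hat R_tK_t^*=-(B_t'P_{t+1}A_t+S_t)$ from arbitrariness of $\bar x_t$) followed by $\eta_{t-1}:=\lambda_{t-1}^*-P_tx_t^*$ to handle $v^*$---this is precisely your ansatz executed in two steps rather than one.

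The only substantive deviation is how you obtain $\hat R_t\ge 0$ in the necessity direction. Your single-time perturbation $\epsilon\mathbf 1_{\{t=s\}}\xi$ is workable but, as written, underspecified: a spike at time $s$ propagates through $x_{s+1},\dots,x_N$, so the second variation is not simply $\xi'\hat R_s\xi$ unless you also let the closed-loop feedback act for $t>s$ (in which case the downstream cost-to-go collapses to $x_{s+1}'P_{s+1}x_{s+1}$ and the hedge ``on the relevant subspace'' becomes unnecessary). The paper avoids this bookkeeping entirely: once the Riccati relation and the range condition are in hand from the necessity argument, the completion-of-squares identity \eqref{16} holds for \emph{every} $u$, and optimality of $(K^*,v^*)$ forces $\sum_t(u_t-K_t^*x_t)'\hat R_t(u_t-K_t^*x_t)\ge 0$ for all $u$, which yields $\hat R_t\ge 0$ directly. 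You may find it cleaner to adopt that route.
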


\begin{proof}
``Necessary.'' Suppose $(K^*, v^*)$ is the optimal closed-loop solution of the Problem (LQ),
according to Lemma \ref{lemma2},  there exist the FBDEs (\ref{5-1})-(\ref{6}) for any $x_0\in \mathbb{R}^n$.
Let ($\bar{x}_t, \bar{\lambda}_{t-1}$) for $t\in\mathbb{N}$ be the solution to the FBDEs
\begin{eqnarray}
\bar{x}_{t+1} &\hspace{-0.8em}=&\hspace{-0.8em} (A_t+B_tK^*_t)\bar{x}_t, \quad \bar{x}_0=x_0,\label{11-1}\\
\bar{\lambda}_{t-1}&\hspace{-0.8em}=&\hspace{-0.8em}(Q_t+S'_tK^*_t)\bar{x}_t+A'_t\bar{\lambda}_t,\quad \bar{\lambda}_{N-1}=H\bar{x}_{N},\label{11-2}
\end{eqnarray}
then, there exists the following equilibrium condition
\begin{eqnarray}\label{112}
  0 &\hspace{-0.8em}=&\hspace{-0.8em} (R_tK^*_t+S_t)\bar{x}_t+B'_t\bar{\lambda}_t.
\end{eqnarray}

Solving (\ref{11-1})-(\ref{11-2}) by backward iteration from $k=N$, one has
\begin{eqnarray*}
\bar{\lambda}_{t-1}&\hspace{-0.8em}=&\hspace{-0.8em}(Q_t+S'_tK^*_t)\bar{x}_t+A'_tP_{t+1}(A_t+B_tK^*_t)\bar{x}_t\nonumber\\
&\hspace{-0.8em}=&\hspace{-0.8em}[Q_t+A'_tP_{t+1}A_t+(A'_tP_{t+1}B_t+S'_t)K^*_t]\bar{x}_t\nonumber\\
&\hspace{-0.8em}\doteq&\hspace{-0.8em} P_t\bar{x}_t,
\end{eqnarray*}
with the terminal value $P_{N}=H$, i.e.,
\begin{eqnarray}\label{12-1}
  P_t &\hspace{-0.8em}=&\hspace{-0.8em} Q_t+A'_tP_{t+1}A_t+(A'_tP_{t+1}B_t+S'_t)K^*_t.
\end{eqnarray}
Adding $\bar{\lambda}_{k}=P_{t+1}\bar{x}_{t+1}$ into (\ref{112}), it yields that
\begin{eqnarray*}
  0 &\hspace{-0.8em}=&\hspace{-0.8em} (R_tK^*_t+S_t)\bar{x}_t+B'_tP_{t+1}(A_t+B_tK^*_t)\bar{x}_t\nonumber\\
  &\hspace{-0.8em}=&\hspace{-0.8em}(\hat{R}_tK^*_t+B'_tP_{t+1}A_t+S_t)\bar{x}_t.
\end{eqnarray*}
The arbitrariness of $\bar{x}_t$ means that
\begin{eqnarray}\label{13}
\hat{R}_tK^*_t+B'_tP_{t+1}A_t+S_t=0,
\end{eqnarray}
which implies $Range(B'_tP_{t+1}A_t+S_t)  \subseteq  Range( \hat{R}_t)$.
To this end, (\ref{12-1}) can be rewritten as
\begin{eqnarray}\label{12-2}
P_t&\hspace{-0.8em}=&\hspace{-0.8em}Q_t+A'_tP_{t+1}A_t+(A'_tP_{t+1}B_t+S'_t)K^*_t
+(K^*_t)'\nonumber\\
&\hspace{-0.8em}&\hspace{-0.8em}\times (\hat{R}_tK^*_t+B'_tP_{t+1}A_t+S_t)\nonumber\\
&\hspace{-0.8em}=&\hspace{-0.8em}Q_t+(A_t+B_tK^*_t)'P_{t+1}(A_t+B_tK^*_t)+S'_tK^*_t\nonumber\\
&\hspace{-0.8em}&\hspace{-0.8em}+(K^*_t)'S_t+(K^*_t)'R_tK^*_t.
\end{eqnarray}

Following from (\ref{13}), one has
\begin{eqnarray*}
 \hat{R}_t^{\dagger}(B'_tP_{t+1}A_t+S_t)=-\hat{R}_t^{\dagger} (\hat{R}_t)K^*_t,
\end{eqnarray*}
that is to say
\begin{eqnarray*}
K^*_t&\hspace{-0.8em}=&\hspace{-0.8em} - \hat{R}_t^{\dagger}(B'_tP_{t+1}A_t+S_t)
+(I-\hat{R}_t^{\dagger}\hat{R}_t) z_t.
\end{eqnarray*}

Consequently,
\begin{eqnarray*}
&\hspace{-0.8em}&\hspace{-0.8em}(A'_tP_{t+1}B_t+S'_t)K^*_t\nonumber\\
=&\hspace{-0.8em}&\hspace{-0.8em} -(K^*_t)'\hat{R}_tK^*_t \nonumber\\
=&\hspace{-0.8em}&\hspace{-0.8em}(K^*_t)'\hat{R}_t\hat{R}_t^{\dagger}(B'_tP_{t+1}A_t+S_t)
\nonumber\\
=&\hspace{-0.8em}&\hspace{-0.8em}-(A'_tP_{t+1}B_t+S'_t)
\hat{R}_t^{\dagger}(B'_tP_{t+1}A_t+S_t).
\end{eqnarray*}
Substituting the above equation to (\ref{12-1}), the Riccati equation (\ref{7}) is obtained.

Next, we will prove $v^*$ satisfying (\ref{9-3}). Denote $\eta_{t-1}=\lambda^*_{t-1}-P_tx^*_t$ for $t\in\mathbb{N}$ with the terminal value $\eta_{N-1}=0$. Adding it into (\ref{6}), it derives
\begin{eqnarray}\label{13-1}
  0 &\hspace{-0.8em}=&\hspace{-0.8em} (R_tK^*_t+S_t)x^*_t+R_tv^*_t+B'_t(\eta_{k}+P_{t+1}x^*_{t+1})\nonumber\\
  &\hspace{-0.8em}=&\hspace{-0.8em}\hat{R}_tv^*_t+(B'_tP_{t+1}A_t+S_t+\hat{R}_tK^*_t)x^*_t+B'_t\eta_{t}\nonumber\\
  &\hspace{-0.8em}=&\hspace{-0.8em}\hat{R}_tv^*_t+B'_t\eta_{t},
\end{eqnarray}
i.e., $v^*_t=-\hat{R}_t^{\dagger}B'_t\eta_{t}
+(I-\hat{R}_t^{\dagger}\hat{R}_t)y_t$,
where $y\in L^2(\mathbb{N}; \mathbb{R}^m)$ is arbitrary. And
\begin{eqnarray*}
&\hspace{-0.8em}&\hspace{-0.8em}(A'_tP_{t+1}B_t+S'_t)v^*_t\nonumber\\
=&\hspace{-0.8em}&\hspace{-0.8em}-(A'_tP_{t+1}B_t+S'_t)\hat{R}_t^{\dagger}B'_t
\eta_{t}
  -(K^*_t)'\hat{R}_t (I-\hat{R}_t^{\dagger}\hat{R}_t)y_t\nonumber\\
=&\hspace{-0.8em}&\hspace{-0.8em}-(A'_tP_{t+1}B_t+S'_t)\hat{R}_t^{\dagger}B'_t\eta_{t}.
\end{eqnarray*}

Combining with (\ref{5-2}), $\eta_{t-1}$ is calculated as
\begin{eqnarray}\label{15}
  \eta_{t-1} &\hspace{-0.8em}=&\hspace{-0.8em} (Q_t+S'_tK^*_t)x^*_t+S'_tv^*_t+A'_t(\eta_t+P_{t+1}x^*_{t+1})\nonumber\\
&\hspace{-0.8em}&\hspace{-0.8em}-P_tx^*_t \nonumber\\
&\hspace{-0.8em}=&\hspace{-0.8em}A'_t\eta_t+(A'_tP_{t+1}B_t+S'_t)v^*_t+(Q_t+A'_tP_{t+1}A_t\nonumber\\
&\hspace{-0.8em}&\hspace{-0.8em}+(A'_tP_{t+1}B_t+S'_t)K^*_t-P_t)x^*_t\nonumber\\
&\hspace{-0.8em}=&\hspace{-0.8em}[A'_t-(A'_tP_{t+1}B_t+S'_t)\hat{R}_t^{\dagger}B'_t]\eta_t.
\end{eqnarray}
By backward iteration, we can derive that $\eta_t=0$ for $t\in \mathbb{N}$ with the terminal value $\eta_{N-1}=0$, which means that $v^*$ is exactly (\ref{9-3}).

``Sufficiency''. For any $  u\in L^2(\mathbb{N}; \mathbb{R}^m)$,
from (\ref{1}) and (\ref{7}), we can derive
\begin{eqnarray*}
&\hspace{-0.8em}&\hspace{-0.8em}\sum\limits^{N-1}_{t=0}(x'_tP_tx_t-x'_{t+1}P_{t+1}x_{t+1})\nonumber\\
=&\hspace{-0.8em}&\hspace{-0.8em} x'_0P_0x_0-x'_{N}Hx_{N}.
\end{eqnarray*}
Adding it into $J(x_0, u)$, we have
\begin{eqnarray}\label{16}
  &\hspace{-0.8em}&\hspace{-0.8em}J(x_0, u)-x'_0P_0x_0 \nonumber\\
=&\hspace{-0.8em}&\hspace{-0.8em}\sum\limits^{N-1}_{t=0}[x'_t(-P_t+Q_t+A'_tP_{t+1}A_t)x_t
\nonumber\\
&\hspace{-0.8em}&\hspace{-0.8em}+2x'_t(A'_tP_{t+1}B_t+S'_t)u_t
+u'_t\hat{R}_tu_t]\nonumber\\
=&\hspace{-0.8em}&\hspace{-0.8em}\sum\limits^{N-1}_{t=0}[x'_t(K^*_t)'\hat{R}_tK^*_tx_t
-2x'_t(K^*_t)' \hat{R}_tu_t
+u'_t\hat{R}_tu_t]\nonumber\\
&\hspace{-0.8em}&\hspace{-0.8em}\nonumber\\
=&\hspace{-0.8em}&\hspace{-0.8em}\sum\limits^{N-1}_{t=0}(u_t-K^*_tx_t)'\hat{R}_t
(u_t-K^*_tx_t).
\end{eqnarray}
Based on the discussion above, it can be concluded that
\begin{eqnarray*}
J(x_0, K^*x^*+v^*) \leq J(x_0, Kx+v)
\end{eqnarray*}
if and only if $\hat{R}_t \geq 0$.  That is, $(K^*, v^*)$ is the closed-loop  optimal solution to  Problem (LQ). This completes the proof.
\end{proof}

\begin{remark}
Theorem \ref{theorem1} has shown that the Problem (LQ) is closed-loop solvable if and only if the generalized Riccati equation (\ref{7}) admits a regular solution. Thus, in Example \ref{example1}, the closed-loop solution calculated from Riccati equation (\ref{7}) cannot be found due to
\begin{eqnarray}
 Range( R+B'P_{t+1}B)&\hspace{-0.8em}=&\hspace{-0.8em}Range( 0)=\{0\},\\
 Range(B'P_{t+1}A+S)&\hspace{-0.8em}=&\hspace{-0.8em}Range(1)=\mathbb{R}.
\end{eqnarray}
That is,
\begin{eqnarray*}
Range(B'P_{t+1}A+S)  \nsubseteq  Range( R+B'P_{t+1}B).
\end{eqnarray*}
\end{remark}

For convenience in future discussions, the relationship between the convexity of the map $u_t\mapsto J(x_0, u_t)$ and the regular solvability of the Riccati equation (\ref{7}) is given as follows, which is obtained based on the special case of $x_0=0$.
\begin{lemma}\label{theorem2}
Under Assumption \ref{A1}-\ref{A2}, there are following statements.
(1) Suppose Problem (LQ) is open-loop solvable, then, for any $u\in L^2(\mathbb{N}; \mathbb{R}^m)$, $J(0, u)\geq 0$.

(2) If $J(0, u)\geq 0$, then,  the map $u\mapsto J(x_0, u)$ is convex.

(3) Suppose that  there exists a constant $\alpha >0$ such that
\begin{eqnarray*}
 J(0, u)\geq \alpha \sum\limits^{N-1}_{t=0}|u_t|^2, \quad \forall u_t\in L^2(\mathbb{N}; \mathbb{R}^m).
\end{eqnarray*}
Then, the Riccati equation (\ref{7}) has a unique solution $P_t$ for $t\in\mathbb{N}$ such that
$\hat{R}_t \geq \alpha I.$
To this end, Problem (LQ) is uniquely closed-loop solvable, and it's uniquely open-loop solvable. The unique closed-loop optimal solution is given by
\begin{eqnarray*}
K^*_t&\hspace{-0.8em}=&\hspace{-0.8em} - \hat{R}_t^{-1}(B'_tP_{t+1}A_t+S_t),\\
v^*_t&\hspace{-0.8em}=&\hspace{-0.8em}0.
\end{eqnarray*}
Accordingly, the open-loop optimal control of Problem (LQ) with initial value $x_0$ is such that $u^*_t=K^*_tx^*_t$ for $t\in\mathbb{N}$, where $x^*_t$ satisfies the closed-loop system (\ref{11-1}) with initial value $x^*_0=\bar{x}_0=x_0$.
\end{lemma}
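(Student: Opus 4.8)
The plan is to handle the three statements in order, the first two by elementary quadratic-functional manipulations and the third by a backward induction that is the real content.

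For (1): when $x_0=0$ the map $u\mapsto x$ determined by (\ref{1}) is linear, so $J(0,\cdot)$ is a purely quadratic functional; in particular $J(0,0)=0$ and $J(0,\lambda u)=\lambda^2 J(0,u)$. If $\bar u$ is an open-loop optimal control for $x_0=0$, comparing with $u=0$ gives $J(0,\bar u)\le 0$ and comparing with $u=2\bar u$ gives $J(0,\bar u)\le 4J(0,\bar u)$, hence $J(0,\bar u)=0$; optimality then yields $J(0,u)\ge 0$ for every $u$. For (2): for fixed $x_0$ the map $u\mapsto x$ is affine, so $J(x_0,\cdot)$ is affine-quadratic in $u$ with purely quadratic part $J(0,\cdot)$; expanding gives, for any $u_1,u_2$ and $\theta\in[0,1]$, the identity
\[
\theta J(x_0,u_1)+(1-\theta)J(x_0,u_2)-J(x_0,\theta u_1+(1-\theta)u_2)=\theta(1-\theta)J(0,u_1-u_2),
\]
from which $J(0,\cdot)\ge 0$ forces convexity (and $J(0,w)>0$ for $w\ne 0$ forces strict convexity, a fact I will reuse in (3)).

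For (3): write $V_s(y)$ for the infimum over controls on $\{s,\dots,N-1\}$ of the cost-to-go $J_s(y,\cdot)$ of the sub-problem run from state $y$ at time $s$. I would prove by backward induction on $s$ from $N$ down to $0$ that $P_N,\dots,P_s$ solve (\ref{7}), that $\hat R_r\ge\alpha I$ for $r\in\{s,\dots,N-1\}$, and that the completing-the-square identity of Theorem \ref{theorem1} holds on $\{s,\dots,N-1\}$, so $V_s(y)=y'P_sy$, attained by $u_r=K^*_rx_r$ with $K^*_r=-\hat R_r^{-1}(B'_rP_{r+1}A_r+S_r)$. The base case $s=N$ is $P_N=H$. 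For the inductive step: a control supported on $\{s,\dots,N-1\}$ leaves the state at $0$ up to time $s$, so $J_s(0,u)=J(0,u)\ge\alpha\sum_{r=s}^{N-1}|u_r|^2$; taking $u_s$ arbitrary followed by the optimal tail and using $V_{s+1}(y)=y'P_{s+1}y$ from the inductive hypothesis, $\inf_{u_{s+1},\dots}J_s(0,u)=u_s'R_su_s+V_{s+1}(B_su_s)=u_s'\hat R_su_s\ge\alpha|u_s|^2$, hence $\hat R_s\ge\alpha I$. Then $\hat R_s$ is invertible, $P_s$ is well defined by (\ref{7}) with $\hat R_s^{\dagger}=\hat R_s^{-1}$, and redoing the telescoping computation leading to (\ref{16}) on the shifted horizon closes the induction.

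Given the solution $P_t$: uniqueness of the Riccati solution follows from the same induction (invertibility of $\hat R_t$ pins down $P_t$ from $P_{t+1}$); conditions (\ref{8-1})--(\ref{8-3}) hold with $\hat R_t\ge\alpha I$, so $I-\hat R_t^{\dagger}\hat R_t=0$ and the $L^2$ requirement is automatic ($\mathbb{N}$ being finite), whence Theorem \ref{theorem1} gives closed-loop solvability with $K^*_t=-\hat R_t^{-1}(B'_tP_{t+1}A_t+S_t)$ and $v^*_t=0$, unique because the free parameters in (\ref{9-1})--(\ref{9-3}) are annihilated. Open-loop solvability and the feedback form $u^*_t=K^*_tx^*_t$ along (\ref{11-1}) then follow from the Remark after Definition \ref{D2}, and uniqueness of the open-loop optimal control follows from the strict convexity of $u\mapsto J(x_0,u)$ noted in (2), since a strictly convex functional has at most one minimizer. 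The main obstacle is the inductive step of (3): justifying that the coercivity of $J(0,\cdot)$ descends to every sub-problem, that $u_s'\hat R_su_s$ is genuinely the partially minimized cost (i.e. that dynamic programming is valid at the inductive stage through $V_{s+1}(\cdot)=(\cdot)'P_{s+1}(\cdot)$), and recomputing the completing-the-square identity on the shifted horizon rather than merely quoting (\ref{16}); everything else is bookkeeping.
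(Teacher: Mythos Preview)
Your proposal is correct. The paper's own proof of this lemma consists solely of citations: parts (1) and (2) are referred to the first- and second-order directional-derivative arguments of \cite{Ni2015}, and part (3) is referred to Proposition~3.2 of \cite{Wang2021}. Your argument is therefore genuinely different in that it is self-contained. For (1) you exploit the pure homogeneity $J(0,\lambda u)=\lambda^{2}J(0,u)$ directly rather than passing through a second-order derivative, and for (2) you write down the exact convexity-defect identity for an affine-quadratic functional; these are the same underlying content as the cited directional-derivative approach but made explicit and elementary. For (3), your backward-induction scheme---fixing $u_s$, partially minimizing over the tail using $V_{s+1}(y)=y'P_{s+1}y$ from the inductive hypothesis, reading off $\hat R_s\ge\alpha I$ from the coercivity inequality, then defining $P_s$ via (\ref{7}) and closing the induction by rerunning the completing-the-square computation (\ref{16}) on $\{s,\dots,N-1\}$---is the standard dynamic-programming proof and is presumably close to what \cite{Wang2021} actually contains. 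What your route buys is that the reader need not chase two external references to see why the result holds; what the paper's route buys is brevity.
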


\begin{proof}
We can prove (1) and (2)  using the first and second-order directional derivatives of $J(x_0, u)$ in Lemma 2.1 and Theorem 2.1 of \cite{Ni2015}.  Applying the results obtained from Proposition 3.2 in \cite{Wang2021}, we can prove (3).
\end{proof}

\section{Open-Loop Solvability by Perturbation Method}
For any $\varepsilon >0$, the perturbed cost functional is given by
\begin{eqnarray}\label{18}
J_{\varepsilon}(x_0, u)&\hspace{-0.8em}=&\hspace{-0.8em}J(x_0, u)+\varepsilon\sum\limits^{N-1}_{t=0}|u_t|^2\nonumber\\
&\hspace{-0.8em}=&\hspace{-0.8em} \sum\limits^{N-1}_{t=0}[x'_tQ_tx_t+2u'_tS_tx_t+u'_t(R_t+\varepsilon I)u_t]\nonumber\\
&\hspace{-0.8em}&\hspace{-0.8em}+x'_{N}Hx_{N},
\end{eqnarray}
where $x$ is the state subject to (\ref{1}). The problem with the perturbed cost functional is given below.
\smallskip

\emph{Problem (LQ)$_{\varepsilon}$.}
For any initial value $x_0$, find the control $\bar{u}\in L^2(\mathbb{N}; \mathbb{R}^m)$ satisfying
\begin{eqnarray*}
 J_{\varepsilon}(x_0, \bar{u})\leq J_{\varepsilon}(x_0, u), \quad \forall u\in L^2(\mathbb{N}; \mathbb{R}^m).
\end{eqnarray*}

Moreover, denote the value function of the Problem  (LQ)$_{\varepsilon}$ as
$V_{\varepsilon}(x_0)=\inf_{ u\in L^2(\mathbb{N}; \mathbb{R}^m)} J_{\varepsilon}(x_0, u_t)$.

\begin{theorem}\label{thm3}

Problem (LQ)$_{\varepsilon}$ is unique closed-loop solvable with $(K^{\varepsilon}_t, v^{\varepsilon}_t)$ for $t\in\mathbb{N}$ satisfying
\begin{eqnarray}
 K^{\varepsilon}_t&\hspace{-0.8em}=&\hspace{-0.8em}- (R_t+B'_tP^{\varepsilon}_{t+1}B_t+\varepsilon I)^{-1}(B'_tP^{\varepsilon}_{t+1}A_t+S_t),\label{19-1}\\
 v^{\varepsilon}_t&\hspace{-0.8em}=&\hspace{-0.8em}0.\label{19-2}
\end{eqnarray}
Accordingly, for the initial value $x_0$, the open-loop solution to Problem (LQ)$_{\varepsilon}$ is given by
\begin{eqnarray}\label{20}
 u^{\varepsilon}_t &\hspace{-0.8em}=&\hspace{-0.8em} K^{\varepsilon}_tx^{\varepsilon}_t,\quad t\in\mathbb{N},
\end{eqnarray}
where $x^{\varepsilon}_t$ is the solution to the closed-loop system
\begin{eqnarray}\label{21}
 x^{\varepsilon}_{t+1} &\hspace{-0.8em}=&\hspace{-0.8em} (A_t+B_tK^{\varepsilon}_t)x^{\varepsilon}_t, \quad
 x^{\varepsilon}_0=x_0,
\end{eqnarray}
and $P^{\varepsilon}_{t}$ is the solution to the Riccati equation
\begin{eqnarray}\label{22}
P^{\varepsilon}_t&\hspace{-0.8em}=&\hspace{-0.8em} Q_t+A'_tP^{\varepsilon}_{t+1}A_t-(A'_tP^{\varepsilon}_{t+1}B_t+S'_t)
(R_t+B'_tP^{\varepsilon}_{t+1}\nonumber\\
&\hspace{-0.8em}&\hspace{-0.8em}\times B_t+\varepsilon I)^{-1}(B'_tP^{\varepsilon}_{t+1}A_t+S_t),
\end{eqnarray}
such that $R_t+B'_tP^{\varepsilon}_{t+1}B_t \geq \varepsilon I$.
\end{theorem}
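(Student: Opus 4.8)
The plan is to observe that Problem (LQ)$_\varepsilon$ is itself an instance of Problem (LQ): it has the same dynamics \eqref{1} and the same cost structure \eqref{2} with $A_t,B_t,Q_t,S_t,H$ unchanged and only the control weight $R_t$ replaced by $R_t+\varepsilon I$. Consequently Theorem \ref{theorem1} and Lemma \ref{theorem2} apply verbatim to Problem (LQ)$_\varepsilon$, with the matrix $\hat R_t$ everywhere replaced by $\hat R^\varepsilon_t:=R_t+B_t'P^\varepsilon_{t+1}B_t+\varepsilon I$. Reading that machinery off, every object in the statement of Theorem \ref{thm3} is exactly what it produces: \eqref{22} is the generalized Riccati equation \eqref{7} of Problem (LQ)$_\varepsilon$ once the Moore--Penrose inverse there is an ordinary inverse; \eqref{19-1} is the corresponding gain $K_t^{*}=-\hat R_t^{\dagger}(B_t'P_{t+1}A_t+S_t)$ specialized to this problem; and $v^\varepsilon_t=0$, with no surviving free parameters $z_t,y_t$ from \eqref{9-1}--\eqref{9-3}, is forced precisely by the invertibility of $\hat R^\varepsilon_t$. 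So the theorem will follow the moment the hypotheses of Lemma \ref{theorem2}(3) are verified for Problem (LQ)$_\varepsilon$.

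That verification is the only substantive step, and it is where the perturbation does its work. When Problem (LQ) is open-loop solvable --- the situation relevant to this section --- Lemma \ref{theorem2}(1) gives $J(0,u)\ge 0$ for every $u\in L^2(\mathbb{N};\mathbb{R}^m)$, and then, directly from \eqref{18},
\begin{equation*}
J_\varepsilon(0,u)=J(0,u)+\varepsilon\sum_{t=0}^{N-1}|u_t|^2\ \geq\ \varepsilon\sum_{t=0}^{N-1}|u_t|^2,\qquad \forall\,u\in L^2(\mathbb{N};\mathbb{R}^m).
\end{equation*}
Hence Lemma \ref{theorem2}(3) applies to Problem (LQ)$_\varepsilon$ with $\alpha=\varepsilon$, and it delivers, in one stroke: unique solvability of \eqref{22} on $\mathbb{N}$ with $\hat R^\varepsilon_t\ge \varepsilon I$ (so the inverses appearing in \eqref{19-1} and \eqref{22} are legitimate); unique closed-loop solvability of Problem (LQ)$_\varepsilon$ with the pair \eqref{19-1}--\eqref{19-2}; and, for each $x_0$, the equality of the open-loop optimal control of Problem (LQ)$_\varepsilon$ with $u^\varepsilon_t=K^\varepsilon_tx^\varepsilon_t$, where $x^\varepsilon$ solves the closed-loop system \eqref{21} with $x^\varepsilon_0=x_0$. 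That is exactly Theorem \ref{thm3}.

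If one wants a self-contained argument that does not quote Lemma \ref{theorem2}(3), the same conclusions can be extracted from the completion-of-squares identity \eqref{16} rewritten for $J_\varepsilon$: one proves by backward induction from $P^\varepsilon_N=H$ that $\hat R^\varepsilon_t\ge \varepsilon I$ is invertible at every $t$, using that the displayed bound, evaluated with $x_0=0$ and $u_0=\cdots=u_{t-1}=0$, forces the cost-to-go from time $t$ onward to dominate $\varepsilon\sum_{s\ge t}|u_s|^2$ --- precisely the positivity needed to form $P^\varepsilon_t$ and continue --- and uniqueness of the minimizer then follows from strict convexity. In either route the one genuine obstacle is the same: one must actually use $J(0,u)\ge 0$, because for small $\varepsilon$ the statement is false when Problem (LQ) is not open-loop solvable; thus the content of Theorem \ref{thm3} is that an $\varepsilon$-perturbation upgrades open-loop solvability to uniform convexity, after which the regular (invertible-Riccati) LQ theory of Section II closes the argument.
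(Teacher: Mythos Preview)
Your proposal is correct and follows essentially the same route as the paper: verify the coercivity bound $J_\varepsilon(0,u)\ge \varepsilon\sum_{t=0}^{N-1}|u_t|^2$ and then invoke Lemma~\ref{theorem2}(3) for Problem~(LQ)$_\varepsilon$, which immediately yields \eqref{19-1}--\eqref{22}. You are in fact more careful than the paper in one respect: the paper derives the coercivity bound ``from \eqref{18}'' without comment, whereas you correctly observe that this step uses $J(0,u)\ge 0$, which is supplied by Lemma~\ref{theorem2}(1) under the standing open-loop solvability hypothesis of Section~III; your alternative completion-of-squares argument is a legitimate self-contained substitute but is not needed to match the paper.
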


\begin{proof}
Following from (\ref{18}), one has
\begin{eqnarray*}
J_{\varepsilon}(0, u)\geq \varepsilon\sum\limits^{N-1}_{t=0}|u_t|^2.
\end{eqnarray*}
Then, based on the proof of Lemma \ref{theorem2}, we obtain the desired results (\ref{19-1})-(\ref{22}).
\end{proof}

Correspondingly, the relationship between the value functions is given.
\begin{lemma}\label{lemma3}
Under the Assumption \ref{A1}-\ref{A2}, considering the system (\ref{1}), (\ref{2}) and (\ref{18}) with the initial value $x_0$,
there holds
\begin{eqnarray*}
  \lim_{\varepsilon \rightarrow 0} V_{\varepsilon}(x_0)=V(x_0).
\end{eqnarray*}
\end{lemma}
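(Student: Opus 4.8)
The plan is to establish the two inequalities $\limsup_{\varepsilon\to 0}V_\varepsilon(x_0)\le V(x_0)$ and $\liminf_{\varepsilon\to 0}V_\varepsilon(x_0)\ge V(x_0)$ separately. The first is the easy half and follows from a monotonicity-and-domination argument. For any fixed admissible $u\in L^2(\mathbb N;\mathbb R^m)$ we have $J_\varepsilon(x_0,u)=J(x_0,u)+\varepsilon\sum_{t=0}^{N-1}|u_t|^2$, so $J_\varepsilon(x_0,u)\downarrow J(x_0,u)$ as $\varepsilon\downarrow 0$, and in particular $V_\varepsilon(x_0)\le J_\varepsilon(x_0,u)$ for every $\varepsilon>0$. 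Taking $\limsup_{\varepsilon\to 0}$ gives $\limsup_{\varepsilon\to 0}V_\varepsilon(x_0)\le J(x_0,u)$, and then taking the infimum over $u$ yields $\limsup_{\varepsilon\to 0}V_\varepsilon(x_0)\le V(x_0)$. Note this step does not even require $V(x_0)$ to be finite in the usual sense; if $V(x_0)=-\infty$ there is nothing to prove for this direction.

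For the reverse inequality I would first dispose of the case $V(x_0)=-\infty$: if the performance index is unbounded below, pick a sequence $u^{(k)}$ with $J(x_0,u^{(k)})\to-\infty$; since $V_\varepsilon(x_0)\le J_\varepsilon(x_0,u^{(k)})=J(x_0,u^{(k)})+\varepsilon\sum_t|u^{(k)}_t|^2$, one can choose $\varepsilon_k\to 0$ slowly enough that $\varepsilon_k\sum_t|u^{(k)}_t|^2\to 0$, forcing $V_{\varepsilon_k}(x_0)\to-\infty$, and monotonicity of $\varepsilon\mapsto V_\varepsilon(x_0)$ then gives $V_\varepsilon(x_0)\to-\infty=V(x_0)$. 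In the remaining (main) case $V(x_0)$ is finite. Here the trivial bound $V_\varepsilon(x_0)\ge V(x_0)$ holds for every $\varepsilon>0$ directly from $J_\varepsilon\ge J$, so $\liminf_{\varepsilon\to 0}V_\varepsilon(x_0)\ge V(x_0)$ is immediate. Combining the two directions gives $\lim_{\varepsilon\to0}V_\varepsilon(x_0)=V(x_0)$, and along the way one observes $\varepsilon\mapsto V_\varepsilon(x_0)$ is nondecreasing, so the limit is in fact a decreasing limit.

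The one genuinely delicate point — and the step I expect to be the main obstacle — is the case $V(x_0)=-\infty$, because there the ``pick $\varepsilon_k$ slowly'' argument must be made carefully: a priori $\sum_t|u^{(k)}_t|^2$ could blow up much faster than $|J(x_0,u^{(k)})|$. The clean fix is to not chase a rate at all: for each fixed $u$, $\inf_\varepsilon J_\varepsilon(x_0,u)=J(x_0,u)$, hence $\inf_\varepsilon V_\varepsilon(x_0)\le J(x_0,u)$ for all $u$, so $\inf_\varepsilon V_\varepsilon(x_0)\le V(x_0)=-\infty$; combined with monotonicity this already forces $\lim_{\varepsilon\to0}V_\varepsilon(x_0)=-\infty$. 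This makes the whole lemma fall out of just two facts: $J_\varepsilon(x_0,u)\downarrow J(x_0,u)$ pointwise in $u$, and the elementary interchange $\inf_u\inf_\varepsilon = \inf_\varepsilon\inf_u$. If the paper intends $V(x_0)$ to always be finite (which is implicitly the setting once open-loop solvability is assumed, by Lemma \ref{theorem2}(1)), then only the first paragraph's argument plus the trivial lower bound is needed, and the proof is a few lines.
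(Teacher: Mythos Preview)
Your proof is correct and follows essentially the same approach as the paper: the trivial lower bound $V_\varepsilon(x_0)\ge V(x_0)$ from $J_\varepsilon\ge J$, together with the upper bound obtained by fixing a (near-)optimal control and sending $\varepsilon\to 0$. Your treatment is in fact slightly more careful, since you address the case $V(x_0)=-\infty$ explicitly, whereas the paper tacitly assumes finiteness when it selects a $\delta$-optimal control $u^\delta$.
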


\begin{proof}
From (\ref{18}) and $\varepsilon >0$, we get
\begin{eqnarray*}
J_{\varepsilon}(x_0, u) \geq J(x_0, u)\geq V(x_0).
\end{eqnarray*}
The arbitrariness of $u\in L^2(\mathbb{N}; \mathbb{R}^m)$ means that $V_{\varepsilon}(x_0)\geq V(x_0)$.

For any $\delta >0$, which is unrelated with $\varepsilon$, there exists a controller $u^{\delta}\in L^2(\mathbb{N}; \mathbb{R}^m)$ satisfying
\begin{eqnarray*}
J(x_0, u^{\delta}) \leq V(x_0)+\delta.
\end{eqnarray*}
Using cost functional (\ref{18}), we get
$V_{\varepsilon}(x_0)\leq J(x_0, u^{\delta})+\varepsilon\sum\limits^{N-1}_{t=0}|u^{\delta}_t|^2\leq V(x_0)+\delta+\varepsilon\sum\limits^{N-1}_{t=0}|u^{\delta}_t|^2.$
Taking $\varepsilon \rightarrow 0$, the above inequality reduces to
\begin{eqnarray*}
V(x_0)\leq V_{\varepsilon}(x_0)\leq V(x_0)+\delta.
\end{eqnarray*}
The arbitrariness of $\delta >0$ means that $\lim_{\varepsilon \rightarrow 0} V_{\varepsilon}(x_0)=V(x_0)$. This completes the proof.
\end{proof}

To this end, the main results of this section is given, which reveals the relationship between the open-loop solution of Problem (LQ)  and $\{u^{\varepsilon}\}$.
\begin{theorem}\label{theorem3}
Under the Assumption \ref{A1}-\ref{A2}, for any initial value $x_0$, let $u^{\varepsilon}$ be defined in (\ref{20}), which is derived from the closed-loop optimal solution ($K^{\varepsilon}, v^{\varepsilon}$) for Problem (LQ)$_{\varepsilon}$. The following conditions are equivalent:

(1) Problem (LQ)  is open-loop solvable with the initial value $x_0$;

(2) The sequence $\{u^{\varepsilon}\}$ is bounded in the Hilbert space $L^2(\mathbb{N}; \mathbb{R}^m)$, that is to say $\sup_{\varepsilon >0}\sum\limits^{N-1}_{t=0}|u^{\varepsilon}_t|^2 < \infty;$

(3) The sequence $\{u^{\varepsilon}\}$ converges strongly in $L^2(\mathbb{N}; \mathbb{R}^m)$ when $\varepsilon \rightarrow 0$.

\end{theorem}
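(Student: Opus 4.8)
The plan is to fix the initial value $x_0$ and reduce everything to an elementary fact about the resolvent of a positive semidefinite operator. Expanding (\ref{2}) with $x_t$ written through $x_0$ and $u$, one gets $J(x_0,u)=\langle\mathcal{M}u,u\rangle+2\langle\mathcal{L}x_0,u\rangle+\langle\mathcal{N}x_0,x_0\rangle$ on the finite-dimensional Hilbert space $L^2(\mathbb{N};\mathbb{R}^m)$, with $\mathcal{M}=\mathcal{M}'$ independent of $x_0$ and $\langle\mathcal{M}u,u\rangle=J(0,u)$. Since Theorem \ref{thm3} makes Problem (LQ)$_\varepsilon$ solvable for \emph{every} $\varepsilon>0$, its quadratic part $\mathcal{M}+\varepsilon I$ must be $\ge 0$ for every $\varepsilon>0$ (otherwise $J_\varepsilon(0,\cdot)$ would be unbounded below), hence $\mathcal{M}\ge 0$. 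Consequently $J_\varepsilon(x_0,\cdot)$ is strictly convex with unique minimizer $u^\varepsilon=-(\mathcal{M}+\varepsilon I)^{-1}\mathcal{L}x_0$, which must coincide with the feedback control (\ref{20})--(\ref{21}) coming from the perturbed Riccati equation (\ref{22}). I would record these three facts first; after them the theorem is purely a statement about the behaviour of $(\mathcal{M}+\varepsilon I)^{-1}\mathcal{L}x_0$ as $\varepsilon\downarrow 0$.

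The next step is the equivalence of (1) with the condition $\mathcal{L}x_0\in\mathrm{Range}(\mathcal{M})$: since $\mathcal{M}\ge 0$, the convex quadratic $J(x_0,\cdot)$ attains its infimum iff the stationarity equation $\mathcal{M}u+\mathcal{L}x_0=0$ is solvable, i.e. iff $\mathcal{L}x_0\in\mathrm{Range}(\mathcal{M})=(\ker\mathcal{M})^{\perp}$ (if not, $J(x_0,\cdot)$ is unbounded below along $\ker\mathcal{M}$); and attaining the infimum is exactly open-loop solvability at $x_0$. With this, the cycle closes quickly. (1)$\Rightarrow$(3): $\mathcal{L}x_0\in\mathrm{Range}(\mathcal{M})$, and the classical limit $(\mathcal{M}+\varepsilon I)^{-1}y\to\mathcal{M}^{\dagger}y$ as $\varepsilon\downarrow 0$, valid exactly for $y\in\mathrm{Range}(\mathcal{M})$, gives $u^\varepsilon\to-\mathcal{M}^{\dagger}\mathcal{L}x_0$ strongly. (3)$\Rightarrow$(2) is trivial. (2)$\Rightarrow$(1): writing $\mathcal{L}x_0=\beta_0+\beta_1$ with $\beta_0\in\mathrm{Range}(\mathcal{M})$, $\beta_1\in\ker\mathcal{M}$, one has $u^\varepsilon=-(\mathcal{M}+\varepsilon I)^{-1}\beta_0-\varepsilon^{-1}\beta_1$ with the two summands orthogonal and the first bounded, so $\sup_\varepsilon|u^\varepsilon|^2<\infty$ forces $\beta_1=0$, i.e. $\mathcal{L}x_0\in\mathrm{Range}(\mathcal{M})$. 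For independent confirmation, (3)$\Rightarrow$(1) also follows directly: if $u^\varepsilon\to\bar u$, then $J(x_0,\bar u)=\lim J(x_0,u^\varepsilon)=\lim\big(V_\varepsilon(x_0)-\varepsilon|u^\varepsilon|^2\big)=V(x_0)$ by continuity of $J$ and Lemma \ref{lemma3}, so $\bar u$ is an open-loop optimal control.

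To stay closer to the ``solve the difference equations by iteration'' technique announced in the paper, only (2)$\Rightarrow$(3) needs reworking. Adding the optimality inequalities $J_\varepsilon(x_0,u^\varepsilon)\le J_\varepsilon(x_0,u^{\varepsilon'})$ and $J_{\varepsilon'}(x_0,u^{\varepsilon'})\le J_{\varepsilon'}(x_0,u^\varepsilon)$ shows $\varepsilon\mapsto|u^\varepsilon|^2$ is nondecreasing and $\varepsilon\mapsto J(x_0,u^\varepsilon)$ nonincreasing; under (2) both are bounded and $J(x_0,u^\varepsilon)=V_\varepsilon(x_0)-\varepsilon|u^\varepsilon|^2\to V(x_0)$ by Lemma \ref{lemma3}. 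Then the parallelogram identity $\tfrac{1}{4}J(0,u^\varepsilon-u^{\varepsilon'})=\tfrac{1}{2}J(x_0,u^\varepsilon)+\tfrac{1}{2}J(x_0,u^{\varepsilon'})-J\big(x_0,\tfrac{u^\varepsilon+u^{\varepsilon'}}{2}\big)\le\tfrac{1}{2}J(x_0,u^\varepsilon)+\tfrac{1}{2}J(x_0,u^{\varepsilon'})-V(x_0)\to 0$, together with $\mathcal{M}\ge 0$, gives $J(0,u^\varepsilon-u^{\varepsilon'})\to 0$; since (2) forces $u^\varepsilon\in\mathrm{Range}(\mathcal{M})$ for all $\varepsilon$ (as above) and $\mathcal{M}$ is strictly positive there, $\{u^\varepsilon\}$ is Cauchy, hence convergent. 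Carrying the co-states along, this is precisely solving the perturbed FBDEs of the Lemma \ref{lemma2} type for the difference $(x^\varepsilon-x^{\varepsilon'},\lambda^\varepsilon-\lambda^{\varepsilon'},u^\varepsilon-u^{\varepsilon'})$ by backward/forward iteration and checking that its driving terms vanish in the limit.

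The genuine obstacle is (2)$\Rightarrow$(3), i.e. promoting boundedness of $\{u^\varepsilon\}$ to strong convergence of the whole family: the monotone norms only fix $\lim_\varepsilon|u^\varepsilon|^2$, and the generically degenerate form $J(0,\cdot)$ lets the parallelogram estimate control $u^\varepsilon-u^{\varepsilon'}$ only transversally to $\ker\mathcal{M}$, so one must additionally prevent the family from wandering inside $\ker\mathcal{M}$ — which is exactly what the representation $u^\varepsilon=-(\mathcal{M}+\varepsilon I)^{-1}\mathcal{L}x_0$ and the implication ``$\{u^\varepsilon\}$ bounded $\Rightarrow\mathcal{L}x_0\perp\ker\mathcal{M}$'' supply. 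Everything else is routine: (1)$\Rightarrow$(2) by comparing $u^\varepsilon$ with any open-loop optimal $\bar u$ through $V(x_0)+\varepsilon|u^\varepsilon|^2\le J_\varepsilon(x_0,u^\varepsilon)\le J_\varepsilon(x_0,\bar u)=V(x_0)+\varepsilon|\bar u|^2$, and (3)$\Rightarrow$(1) as above.
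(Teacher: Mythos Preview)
Your proof is correct and takes a genuinely different route from the paper. You exploit the finite-dimensionality of $L^2(\mathbb{N};\mathbb{R}^m)\cong\mathbb{R}^{mN}$ to write $J(x_0,u)=\langle\mathcal{M}u,u\rangle+2\langle\mathcal{L}x_0,u\rangle+\mathrm{const}$ and identify $u^\varepsilon=-(\mathcal{M}+\varepsilon I)^{-1}\mathcal{L}x_0$ explicitly, after which everything reduces to the spectral behaviour of a single positive semidefinite matrix: open-loop solvability becomes the range condition $\mathcal{L}x_0\in\mathrm{Range}(\mathcal{M})$, strong convergence is the Tikhonov limit $(\mathcal{M}+\varepsilon I)^{-1}\to\mathcal{M}^\dagger$ on $\mathrm{Range}(\mathcal{M})$, and boundedness is read off from the orthogonal decomposition $\mathcal{L}x_0=\beta_0+\beta_1$. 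The paper never writes $\mathcal{M}$ down and argues variationally instead: (1)$\Rightarrow$(2) via the same comparison with an optimizer $\bar u$ you give at the end; (2)$\Rightarrow$(1) by extracting a weakly convergent subsequence and invoking weak lower semicontinuity of the convex $J(x_0,\cdot)$ together with Lemma~\ref{lemma3}; and (2)$\Rightarrow$(3) by first showing that any two weak subsequential limits $u^{1,*},u^{2,*}$ coincide (using inequality (\ref{23}) with $\bar u=\tfrac12(u^{1,*}+u^{2,*})$ and the parallelogram law), then upgrading weak convergence plus norm convergence to strong convergence. Your approach is more elementary and delivers the explicit limit $-\mathcal{M}^\dagger\mathcal{L}x_0$ (the minimum-norm optimal control) for free; the paper's weak-compactness argument is the one that survives verbatim in the continuous-time and stochastic analogues of this theorem, where the control space is genuinely infinite-dimensional and no finite matrix $\mathcal{M}$ is available. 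One small slip: your monotonicity directions are reversed (adding the two optimality inequalities gives $(\varepsilon-\varepsilon')(\|u^\varepsilon\|^2-\|u^{\varepsilon'}\|^2)\le 0$, so $\varepsilon\mapsto\|u^\varepsilon\|^2$ is nonincreasing and $\varepsilon\mapsto J(x_0,u^\varepsilon)$ nondecreasing), but this does not affect your argument.
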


\begin{proof}
(1) $\Rightarrow$ (2). Since $u^{\varepsilon}$ is the open-loop optimal control for Problem (LQ)$_{\varepsilon}$, combining with (\ref{18}), one has
$V_{\varepsilon}(x_0)=J_{\varepsilon}(x_0, u^{\varepsilon})=J(x_0, u^{\varepsilon})+\varepsilon\sum\limits^{N-1}_{t=0}|u^{\varepsilon}_t|^2
 \geq V(x_0)+\varepsilon\sum\limits^{N-1}_{t=0}|u^{\varepsilon}_t|^2.$
Moreover, suppose that $\bar{u}$ is the open-loop optimal solution to Problem (LQ) with the initial value $x_0$. Based on cost functional (\ref{18}), we obtain
$V_{\varepsilon}(x_0)\leq J_{\varepsilon}(x_0, \bar{u})=J(x_0, \bar{u})+\varepsilon\sum\limits^{N-1}_{t=0}|\bar{u}_t|^2
 = V(x_0)+\varepsilon\sum\limits^{N-1}_{t=0}|\bar{u}_t|^2.$
From the discussion above, for $\varepsilon >0$, we have
\begin{eqnarray}\label{23}
 \sum\limits^{N-1}_{t=0}|u^{\varepsilon}_t|^2 \leq
 \frac{1}{\varepsilon}(V_{\varepsilon}(x_0)-V(x_0))\leq \sum\limits^{N-1}_{t=0}|\bar{u}_t|^2,
\end{eqnarray}
which implies that $\sup_{\varepsilon >0}\sum\limits^{N-1}_{t=0}|u^{\varepsilon}_t|^2 < \infty$.

(2) $\Rightarrow$ (1). The boundedness of $\{u^{\varepsilon}\}$ means that there exists a sequence $\{\varepsilon_k\}$ ($k\in\{1, 2, \cdots, \infty\} $) and $u^*\in L^2(\mathbb{N}; \mathbb{R}^m)$ satisfying $\lim_{k \rightarrow \infty}\varepsilon_k=0$ and  $\{u^{\varepsilon_k}\}$ is convergent weakly to a $u^*$, i.e.,
\begin{eqnarray*}
  \|u^*_t\|\leq \lim_{k \rightarrow \infty}\inf \|u^{\varepsilon_k}_t\|.
\end{eqnarray*}
Since $J(x_0, u)$ is convex and continuous with respect to $u$, then there holds
\begin{eqnarray*}
J(x_0, u^*)&\hspace{-0.8em}\leq&\hspace{-0.8em} \lim_{k \rightarrow \infty}\inf J(x_0, u^{\varepsilon_k})\nonumber\\
&\hspace{-0.8em}=&\hspace{-0.8em}\lim_{k \rightarrow \infty}\inf[J_{\varepsilon_k}(x_0, u^{\varepsilon_k})-\varepsilon_k\sum\limits^{N-1}_{t=0}|u^{\varepsilon_k}_t|^2]\nonumber\\
&\hspace{-0.8em}=&\hspace{-0.8em}\lim_{k \rightarrow \infty}  V_{\varepsilon_k}( x_0)=V(x_0).
\end{eqnarray*}
Hence, it's concluded that $u^*$ is an open-loop optimal control for  Problem (LQ).

(3) $\Rightarrow$ (2). This part of the proof is obvious.

(2) $\Rightarrow$ (3). Suppose $\{u^{\varepsilon}\}$ converges strongly in $L^2(\mathbb{N}; \mathbb{R}^m)$ when $\varepsilon \rightarrow 0$, and following from the previous proof, there exists  $\{u^{\varepsilon_k}\}$ converges weakly to an open-loop optimal control.
In this part, firstly, we will show the sequence $\{u^{\varepsilon}\}$ is convergent weakly to the same open-loop optimal control for Problem (LQ) with initial value $x_0$ as $\varepsilon\rightarrow 0$. That is, if there exist two sequences $\{u^{\varepsilon_{i,k}}\}\subseteq \{u^{\varepsilon}\}$ ($i=1, 2$),
which converge weakly to $u^{i, *}$, then $u^{1, *}=u^{2, *}$ is the open-loop optimal
control for Problem (LQ).

It's obvious that $u^{1, *}$ and $u^{2, *}$ are both the open-loop optimal solution to
Problem (LQ) with the initial value $x_0$. Since $J(x_0, u)$ is convex with respect to $u$, there holds that
$J(x_0, \frac{1}{2}(u^{1, *}+u^{2, *}))  \leq  \frac{1}{2} J(x_0, u^{1, *})
+     \frac{1}{2} J(x_0, u^{2, *})= V(x_0).$
That is, $\frac{1}{2}(u^{1, *}+u^{2, *})$ is also an open-loop optimal solution with
the initial value  $x_0$.  Using (\ref{23}), it yields
\begin{eqnarray}
 \sum\limits^{N-1}_{t=0}|u^{\varepsilon_{i,k}}_t|^2 \leq
 \sum\limits^{N-1}_{t=0}|\frac{1}{2}(u^{1, *}_t+u^{2, *}_t)|^2, \quad i=1, 2.
\end{eqnarray}
Due to $\|u^{i, *}_t\|\leq \lim_{k \rightarrow \infty}\inf \|u^{\varepsilon_{i,k}}_t\|$, the above inequality reduces to the following form
\begin{eqnarray}
 \sum\limits^{N-1}_{t=0}|u^{i, *}_t|^2 \leq
 \sum\limits^{N-1}_{t=0}|\frac{1}{2}(u^{1, *}_t+u^{2, *}_t)|^2, \quad i=1, 2,
\end{eqnarray}
which generates
$2[\sum\limits^{N-1}_{t=0}(u^{1, *}_t)^2+\sum\limits^{N-1}_{t=0}(u^{2, *}_t)^2]\leq
\sum\limits^{N-1}_{t=0}(u^{1, *}_t+u^{2, *}_t)^2,$
i.e., $\sum\limits^{N-1}_{t=0}(u^{1, *}_t-u^{2, *}_t)^2\leq 0$. It holds if and only if $u^{1, *}=u^{2, *}$. That is  to say, the sequence $\{u^{\varepsilon}\}$ converges weakly to the same open-loop optimal control of Problem (LQ) along with the initial value $x_0$ as  $\varepsilon \rightarrow 0$.

Next, the sequence $\{u^{\varepsilon}\}$ converges strongly in $L^2(\mathbb{N}; \mathbb{R}^m)$ when $\varepsilon \rightarrow 0$ will be proved.
Suppose $u^*$ is an open-loop optimal control of Problem (LQ), then $\{u^{\varepsilon}\}$ converges weakly to $u^*$, i.e., $\|u^*_t\|\leq \lim_{\varepsilon \rightarrow 0}\inf \|u^{\varepsilon}_t\|$, which means that
$\sum\limits^{N-1}_{t=0}|u^*_t|^2\leq \lim_{\varepsilon \rightarrow 0}\inf \sum\limits^{N-1}_{t=0}|u^{\varepsilon}_t|^2.$
Moreover, from (\ref{23}) there always holds
$\sum\limits^{N-1}_{t=0}|u^{\varepsilon}_t|^2 \leq \sum\limits^{N-1}_{t=0}|u^*_t|^2.$
Combining with the above equation, we have
$\lim_{\varepsilon \rightarrow 0}  \sum\limits^{N-1}_{t=0}|u^{\varepsilon}_t|^2
=\sum\limits^{N-1}_{t=0}|u^*_t|^2.$

Based on the discussion above, it yields
$\lim_{\varepsilon \rightarrow 0}\sum\limits^{N-1}_{t=0}(u^{\varepsilon}_t-u^*_t)^2
=\lim_{\varepsilon \rightarrow 0}[\sum\limits^{N-1}_{t=0}|u^{\varepsilon}_t|^2+\sum\limits^{N-1}_{t=0}|u^*_t|^2
-2\sum\limits^{N-1}_{t=0}(u^*_t)'u^{\varepsilon}_t]=0.$
That is to say, $\{u^{\varepsilon}\}$ converges strongly to $u^*$ as $\varepsilon \rightarrow 0$.
\end{proof}

\begin{corollary}
When any one of the conditions (1) to (3) in Theorem \ref{theorem3} is met, the sequence $\{u^{\varepsilon}\}$ is convergent strongly to an open-loop optimal control of Problem (LQ) along with the initial value $x_0$ as  $\varepsilon \rightarrow 0$.
\end{corollary}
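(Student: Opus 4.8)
The plan is to observe that this is essentially a repackaging of the implication $(2)\Rightarrow(3)$ already established inside the proof of Theorem \ref{theorem3}, so the argument is short. Since conditions $(1)$--$(3)$ of Theorem \ref{theorem3} are equivalent, assuming any one of them lets me invoke condition $(3)$: the family $\{u^{\varepsilon}\}$ converges strongly in $L^2(\mathbb{N};\mathbb{R}^m)$ as $\varepsilon\to 0$. Denote its strong limit by $u^{*}$. It remains only to check that $u^{*}$ is an open-loop optimal control for Problem (LQ) with initial value $x_0$, i.e. that $J(x_0,u^{*})=V(x_0)$.

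To see this I would pass to the limit in the identity $J(x_0,u^{\varepsilon})=J_{\varepsilon}(x_0,u^{\varepsilon})-\varepsilon\sum_{t=0}^{N-1}|u^{\varepsilon}_t|^{2}=V_{\varepsilon}(x_0)-\varepsilon\sum_{t=0}^{N-1}|u^{\varepsilon}_t|^{2}$, which holds because $u^{\varepsilon}$ is the open-loop optimal control of Problem (LQ)$_{\varepsilon}$ by Theorem \ref{thm3}. By condition $(2)$ the quantity $\sum_{t=0}^{N-1}|u^{\varepsilon}_t|^{2}$ stays bounded as $\varepsilon\to 0$, hence $\varepsilon\sum_{t=0}^{N-1}|u^{\varepsilon}_t|^{2}\to 0$; by Lemma \ref{lemma3}, $V_{\varepsilon}(x_0)\to V(x_0)$. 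Therefore $J(x_0,u^{\varepsilon})\to V(x_0)$. On the other hand, $u\mapsto J(x_0,u)$ is a continuous (quadratic) functional on $L^2(\mathbb{N};\mathbb{R}^m)$ and $u^{\varepsilon}\to u^{*}$ strongly, so $J(x_0,u^{\varepsilon})\to J(x_0,u^{*})$. Comparing the two limits yields $J(x_0,u^{*})=V(x_0)$, so $u^{*}$ is open-loop optimal, which is the assertion.

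Alternatively one can bypass even this: strong convergence implies weak convergence, and the proof of $(2)\Rightarrow(1)$ in Theorem \ref{theorem3} already shows that any weak limit point of $\{u^{\varepsilon}\}$ is an open-loop optimal control; since the whole family converges, $u^{*}$ is that common limit and hence optimal. Either way there is no genuine obstacle here — the only point needing care is that the perturbation term $\varepsilon\sum_{t}|u^{\varepsilon}_t|^{2}$ must be shown to vanish in the limit, and this is exactly where the boundedness in condition $(2)$ (equivalently, the a priori bound $(\ref{23})$ arising from open-loop solvability) enters; without it the strong limit need not be optimal.
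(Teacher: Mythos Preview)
Your proposal is correct, and the alternative route you sketch at the end --- strong convergence implies weak convergence, then invoke the proof of $(2)\Rightarrow(1)$ in Theorem~\ref{theorem3} to conclude optimality of the limit --- is exactly the paper's one-line argument, which simply cites the proofs of $(2)\Rightarrow(1)$ and $(2)\Rightarrow(3)$. Your first approach via the strong continuity of $u\mapsto J(x_0,u)$ together with Lemma~\ref{lemma3} is an equally valid minor variant.
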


\begin{proof}
The proof can be obtained by the proof of (2) $\Rightarrow$ (1) and (2) $\Rightarrow$ (3) in Theorem \ref{theorem3}.
\end{proof}

\section{Weak Closed-Loop Solvability of Problem (LQ)}
It has been shown that if
\begin{eqnarray*}
Range(B'_tP_{t+1}A_t+S_t)  \nsubseteq  Range(\hat{R}_t)£¬
\end{eqnarray*}
then the gain matrix $K$ calculated in (\ref{9-1}) by the Riccati equation (\ref{7}) will no-longer be the closed-loop optimal solution of Problem (LQ). In this section, the relation with the open-loop and weak closed-loop solution to Problem (LQ) is revealed.

Firstly, we show that $K^{\varepsilon}$ defined in (\ref{19-1}) is convergent locally in $\tilde{\mathbb{N}}$, where $\tilde{\mathbb{N}}\subseteq {\mathbb{N}}$ with $\tilde{\mathbb{N}}=\{0, 1, \cdots, m\}$, $m\leq N-2$.
\begin{lemma}\label{lemma4}
Under the Assumption \ref{A1}-\ref{A2}, assume Problem (LQ) is open-loop solvable. Then, for any $\tilde{\mathbb{N}}\subseteq {\mathbb{N}}$, the set $\{K^{\varepsilon}\}$ defined in (\ref{19-1}) is convergent in $L^2(\tilde{\mathbb{N}}; \mathbb{R}^{m\times n})$, i.e.,
\begin{eqnarray}\label{28}
\lim_{\varepsilon \rightarrow 0}\sum\limits_{t\in \tilde{\mathbb{N} }}|K^{\varepsilon}_t-K^*_t|^2=0, \quad  \tilde{\mathbb{N}}\subseteq \bar{\mathbb{N}},
\end{eqnarray}
where $K^*\in L^2(\tilde{\mathbb{N}}; \mathbb{R}^{m\times n})$ is deterministic.
\end{lemma}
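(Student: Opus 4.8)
The plan is to bootstrap from Theorem~\ref{theorem3}. Since Problem~(LQ) is open-loop solvable with the initial value $x_0$, condition~(3) of that theorem holds, so $u^\varepsilon\to u^*$ strongly in $L^2(\mathbb N;\mathbb R^m)$ for some open-loop optimal control $u^*$; as $\mathbb N$ is finite this gives $u^\varepsilon_t\to u^*_t$ for each $t$. Feeding this into the state equation~(\ref{1}), whose coefficients are uniformly bounded by Assumption~\ref{A1}, gives $x^\varepsilon_t\to x^*_t$; feeding both into the co-state recursion of Problem~(LQ)$_\varepsilon$, namely $\lambda^\varepsilon_{t-1}=Q_tx^\varepsilon_t+S'_tu^\varepsilon_t+A'_t\lambda^\varepsilon_t$ with $\lambda^\varepsilon_{N-1}=Hx^\varepsilon_N$, gives $\lambda^\varepsilon_t\to\lambda^*_t$; and $\varepsilon u^\varepsilon_t\to 0$. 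All these limits are linear in $x_0$, and, crucially, $K^\varepsilon_t$ in~(\ref{19-1}) does not depend on $x_0$, being generated solely by the Riccati recursion~(\ref{22}).

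To recover $K^\varepsilon_t$ itself I would fix $t\in\tilde{\mathbb N}$ and pass to the \emph{tail} problem on $\{t,t+1,\dots,N-1\}$: minimise $J_t(\xi,u)=\sum_{s=t}^{N-1}[x'_sQ_sx_s+2u'_sS_sx_s+u'_sR_su_s]+x'_NHx_N$ over $u\in L^2(\{t,\dots,N-1\};\mathbb R^m)$ subject to~(\ref{1}) with $x_t=\xi$. Zero-padding a control on $\{t,\dots,N-1\}$ to one on $\mathbb N$ issued from $x_0=0$ gives $J_t(0,u)\ge 0$ by Lemma~\ref{theorem2}(1), so $u\mapsto J_t(\xi,u)$ is a convex quadratic; together with $\inf_u J_t(\xi,u)>-\infty$ this makes the tail problem open-loop solvable with initial state $\xi$. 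Since the Riccati recursion~(\ref{22}) restricted to $\{t,\dots,N-1\}$ with terminal value $P^\varepsilon_N=H$ is exactly the tail of~(\ref{22}) on $\mathbb N$, Theorem~\ref{thm3} identifies the $\varepsilon$-optimal control of the tail problem as $s\mapsto K^\varepsilon_s x^\varepsilon_s$ with $x^\varepsilon_t=\xi$ — the \emph{same} gains $K^\varepsilon$; applying Theorem~\ref{theorem3} to the tail problem forces this control to converge strongly, so at the initial time $K^\varepsilon_t\xi$ converges for every $\xi$. Taking $\xi=e_1,\dots,e_n$ and assembling columns gives $K^\varepsilon_t\to K^*_t$ with $K^*_t$ deterministic (independent of $x_0$, since $K^\varepsilon_t$ is, and the strong limit in Theorem~\ref{theorem3} is unique). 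Summing over the finitely many $t\in\tilde{\mathbb N}$ yields~(\ref{28}).

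I expect the main obstacle to be the claim $\inf_u J_t(\xi,u)>-\infty$ for \emph{every} $\xi$. For $\xi$ in the subspace reachable at time $t$, pre-steering to $\xi$ on $\{0,\dots,t-1\}$ and invoking finiteness of the value function settles it; for general $\xi$ one has to argue from the explicit formula~(\ref{19-1}) instead, showing that open-loop solvability — via $J(0,\cdot)\ge 0$ specialised to controls localised at the last two time instants — forces the range inclusions that make $R_t+B'_tP^\varepsilon_{t+1}B_t$ and $B'_tP^\varepsilon_{t+1}A_t+S_t$ (hence $K^\varepsilon_t=-(R_t+B'_tP^\varepsilon_{t+1}B_t+\varepsilon I)^{-1}(B'_tP^\varepsilon_{t+1}A_t+S_t)$) converge as $\varepsilon\to 0$ even though $P^\varepsilon_{t+1}$ alone need not. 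This is precisely where $m\le N-2$ enters: at $t=N-1$ the Riccati recursion contributes nothing past the frozen terminal value $P^\varepsilon_N=H$, the smoothing is unavailable, and $K^\varepsilon_{N-1}$ may genuinely diverge exactly when the range condition of Theorem~\ref{theorem1} fails.
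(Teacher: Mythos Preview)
Your tail-problem strategy is a genuinely different route from the paper's. The paper never restarts the problem at time $t$; instead it introduces the closed-loop transition matrix $H_\varepsilon(t,l)=\prod_{i=l}^{t}(A_i+B_iK^\varepsilon_i)$, observes that $\Lambda^\varepsilon_t:=K^\varepsilon_t H_\varepsilon(t-1,0)$ converges (because $u^\varepsilon_t=\Lambda^\varepsilon_t x_0$ does, for every $x_0$, by Theorem~\ref{theorem3}), rewrites $H_\varepsilon(t,0)$ as an affine expression in $\Lambda^\varepsilon_0,\dots,\Lambda^\varepsilon_t$ to deduce its convergence, and then recovers $K^\varepsilon_t=\Lambda^\varepsilon_t\,H_\varepsilon(t-1,0)^{-1}$ via a Cauchy estimate on subintervals where $H_\varepsilon(t-1,0)$ is invertible and bounded below. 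What your approach buys is conceptual transparency --- varying the time-$t$ state over a basis reads off $K^\varepsilon_t$ directly; what the paper's approach buys is that it never needs to establish solvability of any auxiliary problem, only to manipulate objects already known to converge.

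The obstacle you flag is a real gap, not just a technicality, and your proposal does not close it. The pre-steering argument gives $\inf_u J_t(\xi,u)>-\infty$ only for $\xi$ in the reachable set at time $t$ (allowing $x_0$ to vary), and this set equals $\mathbb{R}^n$ only when $A_{t-1}\cdots A_0$ is surjective or the controls span the complement. Nothing in Assumptions~\ref{A1}--\ref{A2} rules out a singular $A_s$, and for unreachable $\xi$ you have no finiteness argument, hence no way to invoke Theorem~\ref{theorem3} on the tail; consequently you cannot conclude that $K^\varepsilon_t\xi$ converges for a full basis of $\xi$'s. Your fallback --- extracting convergence of $K^\varepsilon_t$ directly from~(\ref{19-1}) by showing the relevant range inclusions --- is not carried out, and would require controlling $B'_tP^\varepsilon_{t+1}A_t+S_t$ and $R_t+B'_tP^\varepsilon_{t+1}B_t$ as $\varepsilon\to 0$, which is essentially the content of the lemma. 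To make your route complete you would need either an additional hypothesis such as invertibility of the $A_t$, or a separate argument that knowing $K^\varepsilon_t$ on the reachable subspace already determines its limit as a full matrix; the paper's transition-matrix argument sidesteps this issue entirely.
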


\begin{proof}
Denote $H_{\varepsilon}(t, l)=\prod\limits^{t}_{i=l}(A_i+B_iK^{\varepsilon}_i)$ for $t\geq l$ while $H_{\varepsilon}(t,l)=I$ for $t< l$.
Following from (\ref{21}), $x^{\varepsilon}_t$ can be recalculated as
$x^{\varepsilon}_t=H_{\varepsilon}(t-1, 0)x_0$
with $x^{\varepsilon}_0=x_0$.

According to Theorem \ref{theorem3}, the open-loop solvability of Problem (LQ) means that
$u^{\varepsilon}$ defined in (\ref{20}) is strongly convergent, that is,
$u^{\varepsilon}_t =K^{\varepsilon}_tx^{\varepsilon}_t=K^{\varepsilon}_tH_{\varepsilon}(t-1, 0)x_0$
is convergent strongly in $L^2(\mathbb{N}; \mathbb{R}^m)$ for any $\varepsilon >0$.
Denoting $\Lambda^{\varepsilon}_t=K^{\varepsilon}_tH_{\varepsilon}(t-1, 0)$, it follows that $\Lambda^{\varepsilon}$ converges strongly in $L^2(\mathbb{N}; \mathbb{R}^{m\times n})$, and the strong limits is denoted as $\Lambda^{*}$, which indicates that
\begin{eqnarray}\label{26}
\lim_{\varepsilon \rightarrow 0 }\sum\limits^{N-1}_{t=0}|\Lambda^{\varepsilon}_t-\Lambda^{*}_t|^2=0.
\end{eqnarray}

Next, the convergence of $H_{\varepsilon}(t, 0)$ for $t\in\mathbb{N}$ is proved. By the definition, $H_{\varepsilon}(t, 0)$ is rewritten as
\begin{eqnarray*}
H_{\varepsilon}(t, 0)&\hspace{-0.8em}=&\hspace{-0.8em}(A_t+B_tK^{\varepsilon}_t)H_{\varepsilon}(t-1, 0)\nonumber\\
&\hspace{-0.8em}=&\hspace{-0.8em}...=\Big(\prod\limits^{t}_{i=0}A_i\Big) H_{\varepsilon}(-1, 0)+\sum\limits^{t}_{i=0}\Big(\prod\limits^{t}_{l=i+1}A_l\Big) B_i\Lambda^{\varepsilon}_i,
\end{eqnarray*}
with the initial value $H_{\varepsilon}(-1, 0)=I$. Thus, $H_{\varepsilon}(t, 0)$ for $t\in\mathbb{N}$ is convergent since $\Lambda^{\varepsilon}_t$ converges strongly to $\Lambda^{*}_t$ on $t\in\mathbb{N}$ and assume the convergent function is $H^*_{\varepsilon}(t, 0)$ with $t\in\mathbb{N}$ while $H^*_{\varepsilon}(-1, 0)=I$.

By using $H_{\varepsilon}(-1, 0)=I$, there exists a small constant $\Delta_s>0$, for
$t\in \{s, s+1, \cdots, s+\Delta_s\}$, satisfying $H_{\varepsilon}(t-1, 0)$ is invertible and $|H_{\varepsilon}(t-1, 0)|^2\geq \frac{1}{3}$.
Then, we have
\begin{eqnarray}\label{27}
&\hspace{-0.8em}&\hspace{-0.8em}\sum\limits^{s+\Delta_s}_{k=s}|K^{\varepsilon_1}_t-K^{\varepsilon_2}_t|^2\nonumber\\
=&\hspace{-0.8em}&\hspace{-0.8em}\sum\limits^{s+\Delta_s}_{k=s}
|\Lambda^{\varepsilon_1}_tH^{-1}_{\varepsilon_1}(t-1, 0)
-\Lambda^{\varepsilon_2}_tH^{-1}_{\varepsilon_1}(t-1, 0)\nonumber\\
&\hspace{-0.8em}&\hspace{-0.8em}+\Lambda^{\varepsilon_2}_tH^{-1}_{\varepsilon_1}(t-1, 0)-\Lambda^{\varepsilon_2}_tH^{-1}_{\varepsilon_2}(t-1, 0)|^2\nonumber\\
\leq &\hspace{-0.8em}&\hspace{-0.8em} 18\sum\limits^{s+\Delta_s}_{k=s}|\Lambda^{\varepsilon_1}_t-\Lambda^{\varepsilon_2}_t|^2
+162\sum\limits^{s+\Delta_s}_{k=s}|\Lambda^{\varepsilon_2}_t|^2\nonumber\\
&\hspace{-0.8em}&\hspace{-0.8em}\times
\sup_{t\in[s, s+\Delta_t]}|H_{\varepsilon_1}(t-1, 0)-H_{\varepsilon_2}(t-1, 0)|^2.
\end{eqnarray}
Due to (\ref{26}) and the convergence of $H_{\varepsilon}(t, 0)$ on $t\in\mathbb{N}$, by letting $\varepsilon_1, \varepsilon_2 \rightarrow 0$, (\ref{27}) is reduced into
$\lim_{\varepsilon_1, \varepsilon_2 \rightarrow 0}\sum\limits^{s+\Delta_s}_{k=s}|K^{\varepsilon_1}_t-K^{\varepsilon_2}_t|^2=0.$

Due to the compactness of $\tilde{\mathbb{N}}$, finite numbers $s_1$, ... $s_r$ can be selected such that $\tilde{\mathbb{N}}\subseteq \bigcup\limits^{r}_{j=1}[s_j, s_j+\Delta_{s_j}]$ and $\lim_{\varepsilon_1, \varepsilon_2\rightarrow 0} \sum\limits^{s_j+\Delta_{s_j}}_{k=s_j}|K^{\varepsilon_1}_t-K^{\varepsilon_2}_t|^2=0$.

Then, when $\varepsilon_1, \varepsilon_2 \rightarrow 0$ it derives that
$\sum\limits_{t\in\tilde{\mathbb{N}}}|K^{\varepsilon_1}_t-K^{\varepsilon_2}_t|^2\leq
\sum\limits^{r}_{j=1}\sum\limits^{s_j+\Delta_{s_j}}_{t=s_j}
|K^{\varepsilon_1}_t-K^{\varepsilon_2}_t|^2\longrightarrow 0.$
This completes the proof.
\end{proof}

In the following, the equivalent relationship between the open-loop and weak closed-loop solvability for Problem (LQ) is estabilished.

\begin{theorem}\label{theorem5}
Under the Assumption \ref{A1}-\ref{A2}, if Problem (LQ) is open-loop solvable, then $(K^*, 0)$ derived in Lemma \ref{lemma4} is a weak closed-loop optimal solution for Problem (LQ) with $t\in \mathbb{N}$.
Accordingly, the open-loop solvability of Problem (LQ) is equivalent to the weak closed-loop solvability of Problem (LQ).
\end{theorem}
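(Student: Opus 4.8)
The plan is to split the asserted equivalence into its two implications. The easy direction, ``weak closed-loop solvable $\Rightarrow$ open-loop solvable'', is immediate from the Remark following Definition~\ref{definition3}: if $(K,v)\in\mathcal{K}^w_{\mathbb{N}}$ is weak closed-loop optimal then the control $Kx+v$ satisfies $J(x_0,Kx+v)\le J(x_0,u)$ for every $u\in L^2(\mathbb{N};\mathbb{R}^m)$ and every $x_0$, so it is an open-loop optimal control for each initial value and Problem (LQ) is open-loop solvable. Hence the real content is the converse, which is exactly the first assertion of the theorem: open-loop solvability of Problem (LQ) makes the pair $(K^*,0)$ produced by Lemma~\ref{lemma4} weak closed-loop optimal.

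For that direction I would fix an arbitrary initial value $x_0$, let $(K^\varepsilon,0)$ be the closed-loop optimal solution of Problem (LQ)$_\varepsilon$ from Theorem~\ref{thm3}, $x^\varepsilon$ the corresponding state and $u^\varepsilon=K^\varepsilon x^\varepsilon$ the corresponding control, and assemble three convergences as $\varepsilon\to0$, all already available: (i) by Theorem~\ref{theorem3} and the corollary following it, $u^\varepsilon\to u^*$ strongly in $L^2(\mathbb{N};\mathbb{R}^m)$ with $u^*$ an open-loop optimal control of Problem (LQ) for $x_0$; (ii) by Lemma~\ref{lemma4}, $K^\varepsilon\to K^*$ in $L^2(\tilde{\mathbb{N}};\mathbb{R}^{m\times n})$, where $K^*$ is deterministic and, crucially, independent of $x_0$ because $K^\varepsilon$ is built only from the Riccati solution $P^\varepsilon$ of (\ref{22}); (iii) from the representation $x^\varepsilon_t=H_\varepsilon(t-1,0)x_0$ and the convergence of $H_\varepsilon(\cdot,0)$ proved inside Lemma~\ref{lemma4}, $x^\varepsilon_t\to x^*_t$ for each $t$, where $x^*_t$ is the limit of $H_\varepsilon(t-1,0)x_0$.

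Next I would identify $x^*$ as the weak closed-loop state driven by $K^*$ and verify $u^*=K^*x^*$. Passing to the limit in the recursion $x^\varepsilon_{t+1}=(A_t+B_tK^\varepsilon_t)x^\varepsilon_t$ and using (ii)--(iii) gives $x^*_{t+1}=(A_t+B_tK^*_t)x^*_t$, $x^*_0=x_0$, on $\tilde{\mathbb{N}}$, while passing to the limit in $u^\varepsilon_t=K^\varepsilon_tx^\varepsilon_t$ and comparing with (i) gives $u^*_t=K^*_tx^*_t$ on $\tilde{\mathbb{N}}$; since $\tilde{\mathbb{N}}$ may be taken to be $\{0,\dots,m\}$ for any $m\le N-2$, these identities hold for all $t\le N-2$, and the terminal stage is recovered by passing to the limit in $x^\varepsilon_N=A_{N-1}x^\varepsilon_{N-1}+B_{N-1}u^\varepsilon_{N-1}$. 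Consequently $(K^*,0)\in\mathcal{K}^w_{\mathbb{N}}$, its weak closed-loop state is $x^*$, and the control it generates is $u^*$. Optimality then follows at once: for every $x_0$, $J(x_0,K^*x^*)=J(x_0,u^*)=V(x_0)\le J(x_0,u)$ for all $u\in L^2(\mathbb{N};\mathbb{R}^m)$, in particular $J(x_0,K^*x^*)\le J(x_0,Kx+v)$ for all $(K,v)\in\mathcal{K}^w_{\mathbb{N}}$; as $K^*$ does not depend on $x_0$, this is precisely the statement that $(K^*,0)$ is weak closed-loop optimal. Combining the two implications yields the claimed equivalence.

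The step I expect to be the main obstacle is the third paragraph: making the passage to the limit through the $\varepsilon$-dependent closed-loop recursion fully rigorous, i.e.\ confirming that $x^*$ really solves the weak closed-loop state equation with the single limit gain $K^*$, so that $(K^*,0)$ is an honest element of $\mathcal{K}^w_{\mathbb{N}}$ delivering the optimal control for \emph{every} $x_0$ simultaneously rather than just reproducing $u^*$ for one fixed $x_0$; and, tied to this, the careful treatment of the terminal index $t=N-1$, where $K^\varepsilon_{N-1}$ may fail to converge (the range inclusion (\ref{8-3}) can break down at the last stage, cf.\ Example~\ref{example1}), so that one must argue through the convergence of the product $u^\varepsilon_{N-1}=K^\varepsilon_{N-1}x^\varepsilon_{N-1}$ rather than of the factor $K^\varepsilon_{N-1}$ alone. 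Everything else is bookkeeping on top of Lemma~\ref{lemma3}, Lemma~\ref{lemma4} and Theorem~\ref{theorem3}.
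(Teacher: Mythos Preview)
Your proposal is correct and follows essentially the same architecture as the paper's proof: use Theorem~\ref{theorem3} to get $u^\varepsilon\to u^*$ strongly, use Lemma~\ref{lemma4} to get $K^\varepsilon\to K^*$ on $\tilde{\mathbb{N}}$, establish $x^\varepsilon\to x^*$, combine these to conclude $u^*_t=K^*_tx^*_t$ on $\tilde{\mathbb{N}}$, and invoke the definition (via the Remark after Definition~\ref{definition3}) for the reverse implication. The only minor technical variation is that the paper obtains the state convergence by first proving a direct bound $\sup_{t\in\mathbb{N}}|x_t|^2\le L\big(|x_0|^2+\sum_{t=0}^{N-1}|u_t|^2\big)$ and applying it to $x^\varepsilon-x^*$, whereas you extract it from the convergence of $H_\varepsilon(\cdot,0)$ established inside Lemma~\ref{lemma4}; both routes are valid, and your explicit attention to the terminal index $t=N-1$ is in fact more careful than the paper, which leaves that point implicit.
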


\begin{proof}
Due to the open-loop solvability of Problem (LQ), and following from Theorem \ref{theorem3},
$u^{\varepsilon}$  defined in (\ref{20}) is strongly convergent to an open-loop optimal control of Problem (LQ), which is denoted $u^*$. The corresponding optimal state, which is called $x^*$ is such that $x^*_{t+1} = A_tx^*_t+B_tu^*_t$ with $x^*_0=x_0$.
%
%

Next, we will prove that $u^*_t=K^*_tx^*_t$ for $t\in \tilde{\mathbb{N}}$, where $K^*_t$ is deterministic and satisfies (\ref{28}). Then, $(K^*, 0)$ is a weak closed-loop optimal solution to Problem (LQ). Firstly, we show that
\begin{eqnarray}\label{29}
\sup_{t\in \mathbb{N}}|x_t|^2 \leq L (|x_0|^2+ \sum^{N-1}_{t=0} |u_t|^2),
\end{eqnarray}
where $x$, $u$ are subject to (\ref{1}) and $L>0$ represents a genetic constant which can be different from line to line. By iteration, (\ref{1}) is rewritten as 
$x_t=\Big(\prod\limits^{t-1}_{i=0}A_i\Big) x_0+\sum\limits^{t-1}_{i=0}\Big(\prod\limits^{t-1}_{l=i+1}A_l\Big) B_iu_i.$
Thus,  a constant $L>0$ which is different from line to line can be selected  such that
$\sup_{t\in \mathbb{N}}|x_t|^2\leq L |x_0+\sum\limits^{t-1}_{i=0}u_i|^2\leq
L(|x_0|^2+\sum\limits^{N-1}_{i=0}|u_i|^2)$.
According to (\ref{29}), it follows that 
$\sup_{t\in \mathbb{N}}|x^{\varepsilon}_t-x^*_t|^2 \leq
L \sum^{N-1}_{t=0} |u^{\varepsilon}_t-u^*_t|^2$
where $x^{\varepsilon}$ is the solution to (\ref{21}). Combining above equation with Theorem \ref{theorem3}, one has
\begin{eqnarray}\label{31}
\lim_{\varepsilon\rightarrow 0}\sup_{t\in \mathbb{N}}|x^{\varepsilon}_t-x^*_t|^2=0.
\end{eqnarray}

Now, we are in position to prove $u^*_t=K^*_tx^*_t$ ($t\in \tilde{\mathbb{N}}$). Considering
$\sum_{t\in\tilde{\mathbb{N}}}  |u^{\varepsilon}_t-K^*_tx^*_t|^2\leq 2\sum_{t\in\tilde{\mathbb{N}}} |K^{\varepsilon}_t|^2\sup_{t\in \mathbb{N}} |x^{\varepsilon}_t-x^*_t|^2
+2\sum_{t\in\tilde{\mathbb{N}}} |K^{\varepsilon}_t-K^*_t|^2  \sup_{t\in \mathbb{N}} |x^*_t|^2$
and according to Lemma \ref{lemma4} and (\ref{31}), one has 
$\lim_{\varepsilon\rightarrow 0} \sum_{t\in\tilde{\mathbb{N}}}  |u^{\varepsilon}_t-K^*_tx^*_t|^2=0.$
As stated in Theorem \ref{theorem3}, $u^{\varepsilon}_t=K^{\varepsilon}_tx^{\varepsilon}_t$  converges strongly to $u^*_t$ for $t\in \mathbb{N}$  as $\varepsilon\rightarrow 0$, which means $u^*_t=K^*_tx^*_t$ ($t\in \tilde{\mathbb{N}}$) is established. That is to say, the open-loop solvability indicates the weak close-loop solvability. Moreover, using the definition of weak close-loop solvability, the weak close-loop solvability of Problem (LQ) must be open-loop solvable. The proof is completed.

\end{proof}

\section{Numerical Example}
Consider the  system and associated cost functional
\begin{eqnarray}
x_{t+1} &\hspace{-0.8em}=&\hspace{-0.8em} x_t+u_t, \quad t\in\{0, 1\},\label{11}\\
J(x_0, u) &\hspace{-0.8em}=&\hspace{-0.8em} x^2_2-\sum\limits^{1}_{t=0}u^2_t,\label{12}
\end{eqnarray}
with initial value $x_0$. 
Following from the difference Riccati equation (\ref{7})
with the terminal value $P_2=1$, and by simple calculation, the solution of it is exactly $P_t\equiv 1$, $t\in \{0, 1, 2\}$.

From Theorem \ref{theorem1}, it should be noted that the problem with system (\ref{11})-(\ref{12}) is open-loop solvable but not closed-loop solvable due to the following reason:
\begin{eqnarray*}
 Range( R+B'P_{t+1}B)&\hspace{-0.8em}=&\hspace{-0.8em}Range( 0)=\{0\},\\
 Range(B'P_{t+1}A+S)&\hspace{-0.8em}=&\hspace{-0.8em}Range(1)=\mathbb{R}.
\end{eqnarray*}
That is, $Range(B'P_{t+1}A+S)  \nsubseteq  Range( R+B'P_{t+1}B).$

Taking $\varepsilon>0$ and by Theorem \ref{thm3}, the solution to Riccati equation (\ref{22}) is calculated as $P^{\varepsilon}_t=\frac{P^{\varepsilon}_{t+1}(\varepsilon-1)}{P^{\varepsilon}_{t+1}+\varepsilon-1}$,
%
%
with the terminal value $P^{\varepsilon}_2=1$. By backward iteration, $P^{\varepsilon}_t$ is actually calculated as $P^{\varepsilon}_t=\frac{\varepsilon-1}{\varepsilon+(1-t)}$.

Accordingly, the closed-loop optimal gain matrix is such that
$K^{\varepsilon}_t=-\frac{P^{\varepsilon}_t}{\varepsilon-1}=-\frac{1}{\varepsilon+(1-t)}$.
Hence, the corresponding closed-loop state equation is written as
$x^{\varepsilon}_{t+1}=(1+K^{\varepsilon}_t)x^{\varepsilon}_t$ with $x^{\varepsilon}_0=x_0$.
The control is given by $u^{\varepsilon}_t=K^{\varepsilon}_tx^{\varepsilon}_t=K^{\varepsilon}_t
\prod\limits^{t-1}_{i=0}(1+K^{\varepsilon}_i)x^{\varepsilon}_0$,
that is to say,
\begin{eqnarray*}
u^{\varepsilon}_0 &\hspace{-0.8em}=&\hspace{-0.8em} K^{\varepsilon}_0 x^{\varepsilon}_0 =-\frac{1}{\varepsilon+1}x_0,\\
u^{\varepsilon}_1 &\hspace{-0.8em}=&\hspace{-0.8em} K^{\varepsilon}_1 x^{\varepsilon}_1
=K^{\varepsilon}_1(1+K^{\varepsilon}_0)x^{\varepsilon}_0=-\frac{1}{\varepsilon+1}x_0.
\end{eqnarray*}

Since $\sum\limits^{1}_{t=0}|u^{\varepsilon}_t|^2 = \frac{2x^2_0}{(\varepsilon+1)^2}\leq  2x^2_0$
that is to say, $u^{\varepsilon}_t$ is bounded. From the results in Theorem \ref{theorem3}, the open-loop optimal solution is given as $u^{*}_0 = \lim_{\varepsilon\rightarrow 0}u^{\varepsilon}_0=-x_0$ and $u^{*}_1 = \lim_{\varepsilon\rightarrow 0}u^{\varepsilon}_1=-x_0$.

Finally, based on the discussion in Theorem \ref{theorem5}, the weak closed-loop optimal strategy $K^{*}_t$ is given by
\begin{eqnarray*}
K^{*}_t &\hspace{-0.8em}=&\hspace{-0.8em} \lim_{\varepsilon\rightarrow 0}K^{\varepsilon}_t
=\lim_{\varepsilon\rightarrow 0}(-\frac{1}{\varepsilon+(1-t)})
=-\frac{1}{(1-t)}.
\end{eqnarray*}

\section{Conclusion}

This paper discusses the open-loop, closed-loop, and weak closed-loop solvability for the deterministic discrete-time system. For this kind of LQ control problem, the closed-loop solvability, equivalent to the regular solvability of the generalized Riccati equation, indicates the open-loop solvability but not vice versa. When the LQ problem is merely open-loop solvable, we have found a weak closed-loop solution whose outcome is the open-loop solution to the LQ problem so that a linear feedback form of the state exists to the open-loop solution. Moreover, an equivalent relationship between the open-loop and the weak closed-loop solutions to the LQ problem has been established.

\section*{Acknowledgments}
This research is supported by the National Natural Science Foundation of China (No. 12201129)
and Natural Science Foundation of Guangdong Province
(No. 2022A1515010839).

\vfill

\end{document}